\newcommand{\Z}{\mathbb{Z}}
\newcommand{\F}{\mathbb{F}}
\newtheorem{theorem}{Theorem}
\newtheorem{lemma}{Lemma}
\newtheorem{conjecture}{Conjecture}
\theoremstyle{remark}
\newtheorem{remark}{Remark}
\title{Sets with few differences in abelian groups}
\author{Mitchell Lee}
\address{Harvard University}
\email{mitchell@math.harvard.edu}
\begin{document}
\newtheoremstyle{casestyle}
{\topsep}   % ABOVESPACE
{\topsep}   % BELOWSPACE
{\normalfont}  % BODYFONT
{0pt}       % INDENT (empty value is the same as 0pt)
{\bfseries} % HEADFONT
{:\newline}         % HEADPUNCT
{5pt plus 1pt minus 1pt} % HEADSPACE
{}          % CUSTOM-HEAD-SPEC
\theoremstyle{casestyle}
\newtheorem{case}{Case}
\begin{abstract}
Let $(G, +)$ be an abelian group. In 2004, Eliahou and Kervaire found an explicit formula for the smallest possible cardinality of the sumset $A+A$, where $A \subseteq G$ has fixed cardinality $r$. We consider instead the smallest possible cardinality of the difference set $A-A$, which is always greater than or equal to the smallest possible cardinality of $A+A$ and can be strictly greater. We conjecture a formula for this quantity, and prove the conjecture in the case that $G$ is a cyclic group or a vector space over a finite field. This resolves a conjecture of Bajnok and Matzke on signed sumsets.
\end{abstract}
\maketitle
\section{Introduction}
Let $G$ be a finite abelian group of order $N$ written with additive notation. Given subsets $A, B \subseteq G$, the \textit{sumset} of $A$ and $B$ is defined as \[A+B = \{a + b \mid a \in A, b \in B\}\] and the \textit{difference set} of $A$ and $B$ is defined as \[A - B = \{a - b \mid a \in A, b \in B\}.\]
Let $-A$ denote the difference set $\{0\} - A = \{-a \mid a \in A\}$.

\par Given integers $r$ and $s$ with $1 \leq r, s \leq N$, define
\begin{align}
	\mu_G(r, s) &= \min \{|A + B| \mid A, B \subseteq G, |A| = r, |B| = s\} \label{mu}\\
	\rho^+_G(r) &= \min \{|A + A| \mid A \subseteq G, |A| = r\} \label{rhoplus} \\
	\rho^-_G(r) &= \min \{|A - A| \mid A \subseteq G, |A| = r\}. \label{rhominus}
\end{align}
We remark that taking $B = A$ in \eqref{mu} yields $\mu_G(r, r) \leq \rho^+_G(r)$ and taking $B = -A$ yields $\mu_G(r, r) \leq \rho^-_G(r)$.
\par The functions $\mu_G(r, s)$ and $\rho^+_G(r)$ have held considerable interest for over 200 years. In 1813, Cauchy \cite{cauchy} proved the following classical result, which was rediscovered by Davenport \cite{davenport} in 1935.
\begin{restatable}[Cauchy-Davenport Theorem \cite{cauchy,davenport}]{theorem}{cauchydavenport}\label{cauchydavenport}
	\par Let $G=\Z / p\Z$ where $p$ is prime. Then $\mu_{G}(r, s) = \min\{r + s - 1, p\}$ for $1 \leq r, s \leq p$.
\end{restatable}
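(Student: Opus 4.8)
The plan is to prove the two inequalities $\mu_G(r,s) \le \min\{r+s-1,p\}$ and $\mu_G(r,s) \ge \min\{r+s-1,p\}$ separately. For the upper bound I would take the arithmetic progressions $A = \{0,1,\dots,r-1\}$ and $B = \{0,1,\dots,s-1\}$, viewed inside $\Z/p\Z$. Then $A+B$ is the reduction mod $p$ of $\{0,1,\dots,r+s-2\}$, and this set of integers has exactly $r+s-1$ residues mod $p$ when $r+s-1 \le p$ and all $p$ residues otherwise, so $|A+B| = \min\{r+s-1,p\}$ in every case.

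For the lower bound I would use the Davenport ($e$-)transform together with induction on $\min\{|A|,|B|\}$; by symmetry assume $|B| \le |A|$. The base case $|B|=1$ is immediate, since then $|A+B| = |A| = |A|+|B|-1$. For the inductive step, if $A+B = G$ then $|A+B| = p$ and we are done, so assume $A+B \ne G$ and $|B| \ge 2$. For $e \in G$ put $A_e = A \cup (B+e)$ and $B_e = B \cap (A-e)$; an inclusion--exclusion count gives $|A_e| + |B_e| = |A|+|B|$, and a short direct check gives $A_e + B_e \subseteq A+B$. The crucial point is to choose $e$ with $\emptyset \subsetneq B_e \subsetneq B$. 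Such an $e$ exists iff $A - b \ne A - B$ for some $b \in B$; and if instead $A - b = A-B$ for all $b \in B$, then $A$ is invariant under translation by $b_1 - b_2$ for any $b_1, b_2 \in B$, hence---since $|B| \ge 2$ and $p$ is prime, so a nonzero such difference generates $\Z/p\Z$---invariant under all of $\Z/p\Z$, forcing $A = G$ and $A+B = G$, a contradiction. Fixing such an $e$, we have $|B_e| < |B|$, so $\min\{|A_e|,|B_e|\} < \min\{|A|,|B|\}$, and the inductive hypothesis yields $|A_e+B_e| \ge \min\{|A_e|+|B_e|-1,p\} = \min\{|A|+|B|-1,p\}$; since $A_e + B_e \subseteq A+B$, this gives the desired bound.

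The step I expect to be the main obstacle is the existence of the transform parameter $e$: this is exactly where the primality of $p$ (the absence of nontrivial proper subgroups) enters, and some care is needed to secure both conditions---$B_e$ nonempty \emph{and} a proper subset of $B$---simultaneously. As a backup I would keep in mind the Combinatorial Nullstellensatz proof: after reducing to the case $r+s-1 \le p$ (passing to subsets of $A$ and $B$ otherwise), if $|A+B| \le r+s-2$ one picks a set $C \supseteq A+B$ with $|C| = r+s-2$ and considers $f(x,y) = \prod_{c \in C}(x+y-c) \in \F_p[x,y]$; the coefficient of $x^{r-1}y^{s-1}$ equals $\binom{r+s-2}{r-1}$, which is nonzero in $\F_p$ because $r+s-2 < p$, so the Nullstellensatz produces $(a,b) \in A \times B$ with $f(a,b) \ne 0$, contradicting $a+b \in C$. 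This route avoids the induction entirely at the cost of invoking the polynomial method.
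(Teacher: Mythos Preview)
The paper does not supply its own proof of the Cauchy--Davenport Theorem; it is quoted as a classical result (with references to Cauchy and Davenport) and then used as a black box throughout \S\ref{sectionsmall}. So there is no in-paper argument to compare against.

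That said, your proposal is correct. The $e$-transform route is one of the standard proofs: the inclusion $A_e + B_e \subseteq A+B$ and the identity $|A_e|+|B_e|=|A|+|B|$ are exactly as you state, and your ``iff'' characterizing the existence of $e$ with $\emptyset \subsetneq B_e \subsetneq B$ is right in both directions (if $A-b = A-B$ for every $b\in B$ then $A$ is invariant under each nonzero $b_1-b_2$, hence under all of $\Z/p\Z$ by primality, forcing $A=G$). After the transform one has $|B_e|<|B|\le|A|\le|A_e|$, so $\min\{|A_e|,|B_e|\}$ strictly drops and the induction goes through. Your Nullstellensatz backup is likewise a standard and valid alternative; the reduction to $r+s-1\le p$ by passing to subsets with $r'+s'=p+1$ works because $r,s\le p$ guarantees such $r',s'$ exist. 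Either argument would serve perfectly well as a self-contained proof here.
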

\par In 2004, Eliahou and Kervaire \cite{sumsets_infinite} used a classical result of Kneser \cite{kneser} to compute $\mu_G(r, s)$ and $\rho_G^+(r)$ for all finite abelian groups $G$.

\begin{theorem}[Eliahou and Kervaire, {\cite[Theorem~2, Proposition~7]{sumsets_infinite}}] \label{sumsets_arbitrary}
	Let $G$ be a finite abelian group of order $N$. Then \[\mu_G(r, s) = \min_{d \in D(N)} d \left( \left\lceil\frac{r}{d}\right\rceil + \left\lceil\frac{s}{d}\right\rceil - 1\right)\] for $1 \leq r, s \leq N$, where $D(N)$ denotes the set of positive divisors of $N$. Furthermore, we have $\rho^+_G(r) = \mu_G(r, r)$.
\end{theorem}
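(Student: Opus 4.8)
The plan is to prove both assertions together by sandwiching $\mu_G(r,s)$ between matching bounds: the lower bound comes from Kneser's theorem, and the upper bound (which also handles $\rho^+_G$) from a single explicit construction. Write $f(d) = d\bigl(\lceil r/d\rceil + \lceil s/d\rceil - 1\bigr)$ for $d \in D(N)$.

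For the lower bound, let $A, B \subseteq G$ be arbitrary with $|A| = r$, $|B| = s$, let $H = \{g \in G : g + A + B = A + B\}$ be the stabilizer of the sumset, and put $d = |H|$, so $d \in D(N)$ by Lagrange's theorem. Kneser's theorem gives $|A+B| \ge |A+H| + |B+H| - |H|$. Now $A$ meets at least $\lceil r/d\rceil$ cosets of $H$ by pigeonhole, and $A+H$ is exactly the union of the cosets that $A$ meets, so $|A+H| \ge d\lceil r/d\rceil$; likewise $|B+H| \ge d\lceil s/d\rceil$. Hence $|A+B| \ge f(d) \ge \min_{d \in D(N)} f(d)$, and minimizing over $A, B$ gives $\mu_G(r,s) \ge \min_{d \in D(N)} f(d)$. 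The inequality $\rho^+_G(r) \ge \mu_G(r,r)$ is already recorded in the text.

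The crux is the following construction lemma: for any finite abelian group $K$ and any $1 \le a, b \le |K|$ there are $\bar A, \bar B \subseteq K$ with $|\bar A| = a$, $|\bar B| = b$ and $|\bar A + \bar B| \le a + b - 1$, and one may take $\bar A = \bar B$ when $a = b$. I would prove this with a mixed-radix bijection: using the structure theorem, write $K \cong \Z/n_1\Z \times \cdots \times \Z/n_t\Z$, represent each element by its tuple $(x_1, \ldots, x_t)$ of standard residues, and set $\phi(x_1, \ldots, x_t) = x_1 + n_1 x_2 + n_1 n_2 x_3 + \cdots + n_1 \cdots n_{t-1} x_t$, a bijection from $K$ onto $\{0, 1, \ldots, |K|-1\}$. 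The point is that reducing a sum of two standard residues modulo $n_i$ never increases it, so $(x+y)_i \le x_i + y_i$ coordinatewise and therefore $\phi(x+y) \le \phi(x) + \phi(y)$. Taking $\bar A = \phi^{-1}(\{0, \ldots, a-1\})$ and $\bar B = \phi^{-1}(\{0, \ldots, b-1\})$ forces $\bar A + \bar B \subseteq \phi^{-1}(\{0, \ldots, a+b-2\})$, so $|\bar A + \bar B| \le a+b-1$; and $\bar A = \bar B$ when $a = b$.

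Granting the lemma, the upper bound is mechanical. Fix $d \in D(N)$ and choose a subgroup $K \le G$ with $|K| = d$ — this exists because every finite abelian group has a subgroup of each order dividing its cardinality. Apply the lemma inside $G/K$ (of order $N/d$) with $a = \lceil r/d\rceil$ and $b = \lceil s/d\rceil$, both of which lie between $1$ and $N/d$ since $1 \le r, s \le N$ and $d \mid N$, to get $\bar A, \bar B \subseteq G/K$ with $|\bar A + \bar B| \le a + b - 1$. Lift $\bar A$ to $A \subseteq G$ consisting of $a-1$ of its cosets in full together with exactly $r - d(a-1)$ points of one further coset (a number in $\{1,\dots,d\}$ since $a = \lceil r/d\rceil$), and lift $\bar B$ to $B$ similarly; then $|A| = r$, $|B| = s$, and $A + B$ lies in the preimage of $\bar A + \bar B$, so $|A+B| \le d(a+b-1) = f(d)$. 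Minimizing over $d$ yields $\mu_G(r,s) \le \min_{d \in D(N)} f(d)$, matching the lower bound. Running the same argument with $B = A$ (using the $\bar A = \bar B$ clause and $s = r$) gives $\rho^+_G(r) \le \min_{d \in D(N)} d(2\lceil r/d\rceil - 1) = \mu_G(r,r)$, hence equality. The main obstacle is the lemma itself: a naive attempt to induct through the subgroup quotient is circular, since it only relates $\mu_G$ to $\mu_{G/K}$ with no way to start, so one genuinely needs the direct construction, and the inequality $\phi(x+y) \le \phi(x) + \phi(y)$ is the observation that makes it go through.
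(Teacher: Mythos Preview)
The paper does not prove this theorem; it is quoted from Eliahou and Kervaire \cite{sumsets_infinite} and used as a black box throughout, so there is no proof in the paper to compare your argument against.

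For what it is worth, your argument is correct and is essentially the Eliahou--Kervaire argument. The Kneser step for the lower bound is standard and done exactly as you describe. Your mixed-radix construction for the upper bound is valid: the inequality $\phi(x+y)\le\phi(x)+\phi(y)$ holds because reducing $x_i+y_i$ modulo $n_i$ never increases it and the weights $n_1\cdots n_{i-1}$ are positive, so the initial segments $\phi^{-1}(\{0,\dots,a-1\})$ indeed satisfy $|\bar A+\bar B|\le a+b-1$. The existence of a subgroup of each order $d\mid N$ in a finite abelian group, the bound $\lceil r/d\rceil\le N/d$, and the coset-lifting arithmetic are all fine. The $\rho_G^+(r)=\mu_G(r,r)$ part follows immediately from running the same construction with $\bar A=\bar B$ and lifting once. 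Your construction is, incidentally, the lexicographic one that the paper later singles out (in the remark after \autoref{uppervector}) in the special case $G=(\Z/p\Z)^d$.
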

\begin{remark}\label{remarkdepend}
	By \autoref{sumsets_arbitrary}, the quantities $\mu_G(r, s)$ and $\rho_G^+(r)$ depend on $N$, $r$, and $s$, but not the group structure of $G$.
\end{remark}

However, there is no known explicit formula for $\rho^-_G(r)$. In \cite{signed_sumsets, signed_sumsets_elementary}, Bajnok and Matzke considered an $h$-fold variant of this problem. A small adaptation of their proofs yields the following upper bound for $\rho^-_G(r)$, which we conjecture holds with equality.

\begin{restatable}[{cf. \cite[Theorem~5]{signed_sumsets}}]{theorem}{upperbound} \label{upperbound}
	Let $G$ be a finite abelian group of order $N$. Let $e = \exp G$ be the exponent of $G$; that is, the least common multiple of the orders of the elements of $G$. For $1 \leq r \leq N$, define \[D(N, e, r) = \{d_1 d_2 \mid d_1 \in D(N/e), d_2 \in D(e), d_1 e \geq r\}.\] Then \[\rho_G^-(r) \leq \min_{d \in D(N, e, r)} d \left(2\left\lceil\frac{r}{d}\right\rceil - 1\right).\]
\end{restatable}

\begin{conjecture}[cf. {\cite[Conjecture~10]{signed_sumsets}}]\label{difference_conjecture}
	The inequality in \autoref{upperbound} holds with equality. That is, under the hypotheses of \autoref{upperbound}, we have \[\rho_G^-(r) = \min_{d \in D(N, e, r)} d \left(2\left\lceil\frac{r}{d}\right\rceil - 1\right).\]
\end{conjecture}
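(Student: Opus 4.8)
We have the upper bound from \autoref{upperbound}, so the task is the matching lower bound $\rho_G^-(r) \geq \min_{d \in D(N,e,r)} d(2\lceil r/d\rceil - 1)$ when $G$ is cyclic or (the additive group of) a vector space over a finite field; equivalently, one must show $|A-A| \geq \min_{d \in D(N,e,r)} d(2\lceil r/d\rceil - 1)$ for every $A \subseteq G$ with $|A| = r$. The basic tool is Kneser's theorem (every stabilizer $\mathrm{Stab}(X) := \{g \in G : g+X=X\}$ is a subgroup, and $|A+B| \geq |A+H| + |B+H| - |H|$ for $H = \mathrm{Stab}(A+B)$) applied to $A + (-A) = A - A$. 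Writing $H = \mathrm{Stab}(A-A)$ and $d = |H|$, we have $-H = H$, so negation maps $-A+H$ bijectively onto $A+H$ and
\[ |A - A| \;\geq\; |A+H| + |{-A}+H| - d \;=\; 2|A+H| - d \;=\; d(2k-1), \qquad k := |A+H|/d, \]
where $A \subseteq A+H$ forces $dk = |A+H| \geq r$, i.e. $k \geq \lceil r/d\rceil$; also $d \mid N$ by Lagrange's theorem. Thus always $|A-A| \geq d\bigl(2\lceil r/d\rceil - 1\bigr)$ with $d$ a divisor of $N$.

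For $G$ cyclic of order $N$ we have $e = N$, hence $N/e = 1$ and $D(N,e,r) = \{d_2 : d_2 \mid N,\ N \geq r\} = D(N)$. The inequality above therefore already reads $|A-A| \geq \min_{d \in D(N)} d(2\lceil r/d\rceil - 1)$, which together with \autoref{upperbound} proves the conjecture for cyclic $G$ — and this is the part that recovers the Bajnok–Matzke conjecture for $\Z/n\Z$. So for cyclic groups no input beyond Kneser's theorem is needed.

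Now let $G$ be a vector space over a finite field of characteristic $p$, so $G \cong \F_p^m$, $e = p$, $N = p^m$, and $D(N,e,r) = \{p^j : \ell_0 \leq j \leq m\}$ with $\ell_0 := \max\bigl(0, \lceil\log_p r\rceil - 1\bigr)$; write $M(m,r)$ for the conjectured value. I would prove $|A-A| \geq M(m,r)$ for all $A \subseteq \F_p^m$ with $|A|=r$ by induction on $m$. Translate so $0 \in A$. If $A$ does not span $\F_p^m$, replace $\F_p^m$ by the subspace it spans and finish by induction, using $D(p^{m'},p,r) \subseteq D(p^m,p,r)$ (so $M(m',r) \geq M(m,r)$) for $m' \le m$. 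So assume $A$ spans. With $H = \mathrm{Stab}(A-A)$ of dimension $\ell$, set $\bar A = (A+H)/H \subseteq \F_p^{m-\ell}$ and $k = |\bar A|$, so that $|A-A| = p^\ell|\bar A - \bar A|$, $p^\ell k = |A+H| \geq r$, and $\mathrm{Stab}(\bar A - \bar A) = \{0\}$. If $\ell \geq \ell_0$ then $p^\ell \in D(p^m,p,r)$ and Kneser alone suffices: $|A-A| = p^\ell|\bar A - \bar A| \geq p^\ell(2k-1) \geq p^\ell\bigl(2\lceil r/p^\ell\rceil - 1\bigr) \geq M(m,r)$. If $1 \leq \ell < \ell_0$ then $m - \ell < m$, so by induction $|\bar A - \bar A| \geq M(m-\ell,k)$; expanding this bound, multiplying through by $p^\ell$, and using $|A+H| \geq r$ together with $\lceil\log_p k\rceil \geq \lceil\log_p r\rceil - \ell$ shows that every term arising is of the form $p^{\ell+j}\bigl(2\lceil r/p^{\ell+j}\rceil - 1\bigr)$ with $p^{\ell+j} \in D(p^m,p,r)$, so again $|A-A| \geq M(m,r)$.

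This reduces the $\F_p^m$ case to the situation $\mathrm{Stab}(A-A) = \{0\}$ with $A$ spanning $\F_p^m$, which is where the main difficulty lies. When $p = 2$ there is nothing more to do: since $A - A = A + A$, we have $\rho^-_{\F_2^m}(r) = \rho^+_{\F_2^m}(r) = \mu_{\F_2^m}(r,r)$, which is evaluated by \autoref{sumsets_arbitrary}, and a short calculation shows this coincides with $M(m,r)$ (the divisors excluded from $D(N,e,r)$ never attain the minimum). For $p$ odd, Kneser gives only $|A-A| \geq 2r-1$; this already exceeds $M(m,r)$ when $r \leq p$ (there $\ell_0 = 0$, so $M(m,r) \leq 2r-1$), but for $r > p$ one generally has $2r-1 < M(m,r)$, so a genuinely new argument is needed. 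The crucial extra fact is that $\mathrm{Stab}(A-A) = \{0\}$ forces $A - A \neq \F_p^m$, so there is a nonzero $x$ with $A \cap (A+x) = \emptyset$; intersecting $A$ with each coset of the line $\langle x\rangle$ and invoking Cauchy–Davenport (\autoref{cauchydavenport}) inside these copies of $\F_p$ bounds each such intersection by $\lfloor p/2\rfloor$, whence $r = |A| \leq p^{m-1}\lfloor p/2\rfloor$, and in particular $A - A = \F_p^m$ once $r$ is large. For the intermediate range of $r$ I would project $A$ along a carefully chosen line $L$ so that the image $\bar A \subseteq \F_p^{m-1}$ still spans and $\mathrm{Stab}(\bar A - \bar A)$ is controlled, bound $|A - A|$ from below by the sum over $\bar y \in \bar A - \bar A$ of the best Cauchy–Davenport contribution $\min(a_{\bar x_1} + a_{\bar x_2} - 1,\,p)$ over fibres with $\bar x_1 - \bar x_2 = \bar y$ (here $a_{\bar x}$ is the size of the fibre of $A$ over $\bar x$), and combine this with the inductive lower bound on $|\bar A - \bar A|$ to close the gap left by Kneser. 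Engineering the choice of $L$, together with the accompanying Vosper/Freiman-type description of those $A$ for which $|A-A|$ is close to $2r-1$, is the step I expect to be the principal obstacle.
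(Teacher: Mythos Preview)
Your treatment of the cyclic case is correct and is essentially the paper's argument unpacked: the paper invokes $\rho_G^-(r)\ge\mu_G(r,r)$ and Eliahou--Kervaire, which in turn rests on exactly the Kneser computation you wrote down. Your $p=2$ observation is also fine.

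For $G=(\Z/p\Z)^m$ with $p$ odd, however, your proposal is not a proof: you yourself isolate the case $\mathrm{Stab}(A-A)=\{0\}$, $A$ spanning, $r>p$, and then only sketch a projection-plus-Cauchy--Davenport-on-fibres strategy, explicitly flagging the choice of the line $L$ and the near-equality structure as ``the principal obstacle.'' That obstacle is real. Kneser alone gives $|A-A|\ge 2r-1$ here, which is strictly weaker than the target once $r>p$, and nothing in your outline forces the fibre contributions to add up to $p^t\min\{2\lceil r/p^t\rceil-1,p\}$ rather than something smaller; controlling $\mathrm{Stab}(\bar A-\bar A)$ after a generic projection is exactly the delicate point, and no Vosper/Freiman statement you cite closes it.

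The paper takes a different route that avoids the stabiliser reduction entirely. First it proves a \emph{dimension independence} lemma: if $r\le p^{d_2}$ then $\rho_{(\Z/p\Z)^{d_1}}^-(r)=\rho_{(\Z/p\Z)^{d_2}}^-(r)$, by finding a line disjoint from $(A-A)\setminus\{0\}$ and projecting along it. This lets one assume $d=t+1$ where $p^t<r\le p^{t+1}$. The base case $t\le 1$ (i.e.\ $r\le p^2$) is then handled by a direct and rather intricate analysis in $(\Z/p\Z)^2$: one fibres $A$ over a well-chosen direction, sets $m=\max_x|A\cap\ell_x|$, and splits into three cases ($m\ge(p+1)/2$; $m=n+1$; $n+2\le m\le(p-1)/2$), the last of which requires a separate combinatorial inequality (proved via Freiman's dimension lemma applied to Ferrers diagrams). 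For $t\ge 2$ the paper proves an \emph{intersection lemma}: if $S\subseteq(\Z/p\Z)^{t+1}$ meets every hyperplane through $0$ in at least $mp^{t-1}$ points, then $|S|\ge mp^t$. Applying this with $S=A-A$ and using the pigeonhole bound $|A\cap(H+x_0)|\ge\lceil r/p\rceil$ in some coset of each hyperplane $H$ reduces to the inductive hypothesis for $\lceil r/p\rceil$. Your fibre idea is morally related to the $r\le p^2$ analysis, but the paper's dimension-independence step is what makes that two-dimensional computation sufficient, and the hyperplane-intersection lemma is the missing mechanism that drives the induction for larger $r$.
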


\begin{remark}
	We have the inequality $\rho_G^+(r) = \mu_G(r, r) \leq \rho_G^-(r)$, and it is possible that $\rho_G^+(r) < \rho_G^-(r)$. For example, if $G = (\Z / 3 \Z)^2$, then $\rho_G^+(4) = 7$ and $\rho_G^-(4) = 9$. It is also worth noting that in contrast to $\rho_G^+(r)$ (see \autoref{remarkdepend}), the quantity $\rho_G^-(r)$ cannot be determined from $N$ and $r$ alone.
\end{remark}

The goal of this paper is to prove two important special cases of \autoref{difference_conjecture}.

First, consider the case that $G = \Z / N\Z$ is a finite cyclic group. In this case, we have $e = \exp G = N$, so $D(N, e, r) = D(N)$ for $1 \leq r \leq N$. Thus, the statement of \autoref{difference_conjecture} becomes \autoref{cyclic} below.

\begin{restatable}[{cf. \cite[Theorem~4]{signed_sumsets}}]{theorem}{cyclic} \label{cyclic}
	Let $G = \Z / N\Z$. Then \[\rho_G^-(r) = \min_{d \in D(N)} d \left( 2\left\lceil\frac{r}{d}\right\rceil  - 1\right)\] for $1 \leq r \leq N$.
\end{restatable}

Second, consider the case that $G = (\Z / p\Z)^d$ where $p$ is prime and $d \geq 0$. Then \autoref{mainthm} below, which is the main result of this paper, computes $\rho_G^-(r)$ for $1 \leq r \leq p^d$. We will verify in \autoref{sectionoutline} that \autoref{mainthm} agrees with the prediction given by \autoref{difference_conjecture}.

\begin{restatable}{theorem}{mainthm}\label{mainthm}
	Let $G=(\Z / p\Z)^d$ where $p$ is prime and $d \geq 0$. Let $t$ and $r$ be integers with $0 \leq t \leq d$ and $p^t < r \leq p^{t+1}$. Then \[\rho_G^-(r) = p^t \min\left\{2\left\lceil\frac{r}{p^t}\right\rceil - 1, p\right\}.\]
\end{restatable}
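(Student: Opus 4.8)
The plan is to read the upper bound off \autoref{upperbound} and prove the matching lower bound by induction on $d$.

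\textbf{Upper bound.} For $G=(\Z/p\Z)^d$ we have $N=p^d$ and $e=\exp G=p$, so $D(N/e)=\{1,p,\dots,p^{d-1}\}$, $D(e)=\{1,p\}$, and (for $p^t<r\le p^{t+1}$) $D(N,e,r)=\{p^t,p^{t+1},\dots,p^d\}$. Minimizing $d'(2\lceil r/d'\rceil-1)$ over this set gives exactly $p^t\min\{2\lceil r/p^t\rceil-1,\ p\}$: the divisor $p^t$ contributes $p^t(2\lceil r/p^t\rceil-1)$, and each $p^i$ with $i\ge t+1$ contributes $p^i\ge p^{t+1}$. Hence \autoref{upperbound} supplies the ``$\le$'' direction, and the whole content of the theorem is that $|A-A|\ge p^t\min\{2\lceil r/p^t\rceil-1,\ p\}$ for every $A\subseteq(\Z/p\Z)^d$ with $|A|=r$.

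\textbf{Lower bound, the reductions.} Write $\Phi(s)=p^{\tau}\min\{2\lceil s/p^{\tau}\rceil-1,\ p\}$ where $p^{\tau}<s\le p^{\tau+1}$; one checks that $\Phi$ is nondecreasing and that $p^a\Phi(\lceil s/p^a\rceil)=\Phi(s)$ whenever $p^a<s$ (using $\lceil\lceil x\rceil/y\rceil=\lceil x/y\rceil$). We induct on $d$, the case $d\le 1$ being \autoref{cauchydavenport}. Given $A$ with $|A|=r$, let $H=\mathrm{stab}(A-A)$, of order $p^j$; then $A-A$ is a union of $H$-cosets, so $|A-A|=p^j|\bar A-\bar A|$ with $\bar A=\pi(A)\subseteq\bar G:=G/H$ and $m:=|\bar A|\ge\lceil r/p^j\rceil$. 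If $j\ge t+1$ then $|A-A|\ge|H|\ge p^{t+1}\ge\Phi(r)$. If $1\le j\le t$ then $\dim\bar G=d-j<d$, so the inductive hypothesis and monotonicity give $|A-A|=p^j|\bar A-\bar A|\ge p^j\Phi(m)\ge p^j\Phi(\lceil r/p^j\rceil)=\Phi(r)$. This leaves the case $j=0$, i.e.\ $A-A$ is aperiodic. If moreover $A$ lies in a proper coset — equivalently $V:=\langle A-A\rangle\neq G$ — then $|V|\ge|A|>p^t$ forces $\dim V>t$, and the inductive hypothesis inside $V$ finishes. So we are reduced to the case that $A-A$ is aperiodic \emph{and} $A$ affinely spans $(\Z/p\Z)^d$.

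\textbf{Lower bound, the heart of the matter.} This remaining case is a Freiman-type statement: a set that affinely spans $(\Z/p\Z)^d$ and has aperiodic difference set must have a large difference set. Over $\Z^d$, Freiman's lemma gives $|A-A|\ge(d+1)|A|-\binom{d+1}{2}$, which is already enough after a short computation using only $d\ge t+1$ (automatic, since $r\le p^d$) and $r>(\lceil r/p^t\rceil-1)p^t$; but over $(\Z/p\Z)^d$ one must handle wraparound, and aperiodicity is genuinely needed — the inequality fails for, e.g., a union of two cosets of a hyperplane, whose difference set is periodic. I would prove it by a further induction on $d$: choose $g\notin A-A$ and decompose $A$ along the $p$ cosets of $\langle g\rangle$. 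Each slice, viewed in $\Z/p\Z\cong\langle g\rangle$, omits $\pm g$ as a difference, so it is an independent set in the $p$-cycle and has size at most $\lfloor p/2\rfloor$; summing $|A-A|$ over the cosets of $\langle g\rangle$ (indexed by $\pi(A)-\pi(A)$), applying Cauchy–Davenport within the slices and the inductive hypothesis to $\pi(A)\subseteq G/\langle g\rangle$, and invoking the identity $p^a\Phi(\lceil s/p^a\rceil)=\Phi(s)$ recorded above, should yield $|A-A|\ge\Phi(r)$. I expect the real difficulty to be concentrated exactly here — in pushing this slicing argument through when the slices have unequal sizes, where one must use that a given coset difference $\pi(a)-\pi(a')$ is realized by several pairs of slices and track how the Cauchy–Davenport contributions interact with the structure of $\pi(A)$ in the lower-dimensional group.
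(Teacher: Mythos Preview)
Your upper bound and your two reductions (pass to $G/H$ when $H=\mathrm{stab}(A-A)$ is nontrivial; pass to $\langle A-A\rangle$ when $A$ does not span) are correct, and the identity $p^a\Phi(\lceil s/p^a\rceil)=\Phi(s)$ is verified as you say. But the proposal is not a proof: the step you yourself label ``the heart of the matter'' is only a plan, and you explicitly acknowledge that the difficulty is concentrated there. Reducing to ``$A-A$ aperiodic and $A$ spanning'' buys you very little structure---aperiodicity only guarantees a single missing nonzero difference $g$, and the slicing along $\langle g\rangle$ that you sketch leads to exactly the kind of unequal-slice bookkeeping you flag as unresolved. In particular, from Cauchy--Davenport on the slices plus the inductive bound on $|\pi(A)-\pi(A)|$ one gets an inequality of the shape $\sum_{z\in\pi(A)-\pi(A)}\max_{y-y'=z}(\lambda_y+\lambda_{y'}-1)\ge\ ?$, and turning this into $\Phi(r)$ is a genuine combinatorial lemma, not a formality.

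For comparison, the paper does \emph{not} reduce via the stabilizer at all. Instead it first proves a dimension-invariance lemma ($\rho^-_{(\Z/p\Z)^{d_1}}(r)=\rho^-_{(\Z/p\Z)^{d_2}}(r)$ whenever both are defined), which lets one work in the minimal dimension $d=t+1$. The induction is then on $r$, with the base case $r\le p^2$ handled in $(\Z/p\Z)^2$ by a substantial three-case analysis (depending on the maximal slice size along a well-chosen direction) that ultimately rests on a Freiman-dimension-lemma computation for Ferrers diagrams. The inductive step uses a clean averaging lemma: if $S\subseteq(\Z/p\Z)^d$ with $d\ge 3$ satisfies $|S\cap H|\ge mp^{d-2}$ for \emph{every} vector hyperplane $H$, then $|S|\ge mp^{d-1}$; applying this to $S=A-A$ and checking the hyperplane hypothesis via a pigeonhole on cosets plus the inductive bound finishes. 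The work you are missing is precisely the $r\le p^2$ base case, and your slicing sketch, if it can be made to work at all, would have to reproduce something of comparable strength.
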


\par As a consequence of \autoref{mainthm}, we obtain the following result, which appears as Conjecture~18 in \cite{signed_sumsets_elementary}. We use the notation $\rho_\pm(G, m, r)$ defined in \cite{signed_sumsets_elementary}.
\begin{restatable}[{\cite[Conjecture~18]{signed_sumsets_elementary}}]{theorem}{bajnokmatzkeconjecture} \label{bajnokmatzkeconjecture}
	Let $p > 2$ be a prime number, and let $c$ and $v$ be integers with $0 \leq c \leq p-1$ and $1 \leq v \leq p$. Let $m = cp + v$.
	\begin{enumerate}[(a)]
		\item If $1 \leq c \leq (p-3)/2$, then \[\rho_\pm((\Z / p\Z)^2, m, 2) = (2c+1)p.\]
		\item If $c = (p-1)/2$ and $v \leq (p-1)/2$, then \[\rho_\pm((\Z / p\Z)^2, m, 2) = p^2 - 1.\]
	\end{enumerate}
\end{restatable}

\par  In \autoref{sectioncyclic}, we will prove \autoref{cyclic}. In \autoref{sectionupper}, we will prove \autoref{upperbound}. In \Cref{sectionoutline,sectionindependence,sectionsmall,sectionmainthm}, we will prove \autoref{mainthm}. Finally, in \autoref{bajnokmatzkeproof}, we will prove \autoref{bajnokmatzkeconjecture}.

\section{The cyclic case}\label{sectioncyclic}
The goal of this section is to prove \autoref{cyclic}, which computes $\rho_G^-(r)$ in the case that $G$ is a finite cyclic group. The proof closely follows that of \cite[Theorem~4]{signed_sumsets}, though it should be noted that \autoref{cyclic} does not follow directly from \cite[Theorem~4]{signed_sumsets} due to differences in the definitions of $2_{\pm} A$ and $A - A$.
\cyclic*
\begin{proof}[Proof of \autoref{cyclic}]
	By \autoref{sumsets_arbitrary}, we have \[\rho_G^-(r) \geq \mu_G(r, r) = \min_{d \in D(N)} d \left( 2\left\lceil\frac{r}{d}\right\rceil  - 1\right)\] so it remains to show that \[\rho_G^-(r) \leq \min_{d \in D(N)} d \left( 2\left\lceil\frac{r}{d}\right\rceil  - 1\right).\]
	\par It suffices to show that \[\rho_G^-(r) \leq d \left( 2\left\lceil\frac{r}{d}\right\rceil  - 1\right)\] for each $d \in D(N)$.
	For this, we will construct a set $A \subseteq G$ with $|A| \geq r$ and \[|A - A| \leq d \left( 2\left\lceil\frac{r}{d}\right\rceil  - 1\right).\] Let $H$ be the subgroup of $G$ of order $d$, and let $x$ be a generator for $G / H$. Take $A$ to be the ``coset arithmetic progression'' \[A = \bigcup_{i=0}^{\left\lceil r/d\right\rceil - 1} (H + i x).\] We compute \[A - A = \bigcup_{i=1 - \left\lceil r/d\right\rceil }^{\left\lceil r/d\right\rceil - 1} (H + i x),\] so $|A| = d \lceil r/d \rceil \geq r$ and \[|A - A| \leq d \left( 2\left\lceil\frac{r}{d}\right\rceil  - 1\right)\] as desired.
\end{proof}

\begin{remark}
	By comparing the expressions in \autoref{sumsets_arbitrary} and \autoref{cyclic}, we see that $\rho_G^-(r) = \rho_G^+(r) = \mu_G(r, r)$ for $1 \leq r \leq N$ if $G = \Z / N\Z$ is a finite cyclic group.
\end{remark}

\section{\texorpdfstring{An upper bound on $\rho_G^-(r)$}{An upper bound}} \label{sectionupper}
We shall now restate and prove \autoref{upperbound}. The proof very closely follows that of \cite[Theorem~5]{signed_sumsets}.

\upperbound*

\begin{proof}
	\par It suffices to show that \[\rho_G^-(r) \leq d \left(2 \left \lceil \frac{r}{d} \right \rceil - 1\right)\] for each $d \in D(N, e, r)$. For this, we will construct a set $A \subseteq G$ with $|A| \geq r$ and \[|A - A| \leq d \left( 2\left\lceil\frac{r}{d}\right\rceil  - 1\right).\]  Write $d = d_1 d_2$ for $d_1 \in D(N / e)$, $d_2 \in D(e)$, and $d_1 e \geq r$.
	\par By the structure theorem for finitely generated abelian groups, the group $G$ is isomorphic to a direct product $H \times (\Z/e\Z)$ for some abelian group $H$ with $|H| = N/e$. Since $d_1 \in D(N / e)$, we can find a subgroup $A_1 \subseteq H$ with $|A_1| = d_1$. Let $s = \lceil r / d_1 \rceil$. Then $s \leq e$, so by \autoref{cyclic} there is a subset $A_2 \subseteq \Z / e \Z$ with $|A_2| = s$ and \[|A_2 - A_2| \leq d_2 \left(2 \left\lceil\frac{s}{d_2}\right\rceil - 1\right).\]
	Take $A = A_1 \times A_2 \subseteq H \times (\Z / e \Z) \cong G$. Then $|A| = d_1s = d_1\lceil r / d_1 \rceil \geq r$ and
	\begin{align*}
		|A - A| &=|(A_1 \times A_2) - (A_1 \times A_2)| \\
		&=|(A_1 - A_1) \times (A_2 - A_2)| \\
		&=|A_1 - A_1| |A_2 - A_2| \\
		&\leq d_1  \left(d_2 \left(2 \left\lceil\frac{\lceil r / d_1 \rceil}{d_2}\right\rceil - 1\right)\right) \\
		&= d \left(2 \left \lceil \frac{r}{d} \right \rceil - 1\right)
	\end{align*}
	as desired.
\end{proof}

\section{An outline of the proof of \autoref{mainthm}}\label{sectionoutline}
\Cref{sectionoutline,sectionindependence,sectionsmall,sectionmainthm} of this paper will contain the proof of \autoref{mainthm}, which will proceed in four steps:

\begin{enumerate}
	\item \label{step_upper} We will show that the bound given in \autoref{mainthm} is achieved. That is, we will show that \[\rho_G^-(r) \leq p^t \min\left\{2\left\lceil\frac{r}{p^t}\right\rceil - 1, p\right\}.\]
	\item \label{step_inv} We will show that for $G = (\Z / p\Z)^d$, the quantity $\rho_G^-(r)$ only depends on $r$ and $p$ and not $d$, as long as $d$ is large enough that $\rho_G^-(r)$ is defined (that is, $r \leq p^d$).
	\item \label{step_small} By applying the Cauchy-Davenport~Theorem (\autoref{cauchydavenport}) repeatedly, we will prove \autoref{mainthm} in the case that $r \leq p^2$.
	\item \label{step_large} We will conclude the proof of the theorem by induction on $r$.
\end{enumerate}

We start with the following result, which is step~\eqref{step_upper} above.

\begin{lemma}\label{uppervector}
	With the notation of \autoref{mainthm}, we have\[\rho_G^-(r) \leq p^t \min\left\{ 2\left\lceil\frac{r}{p^t}\right\rceil - 1, p\right\}.\]
\end{lemma}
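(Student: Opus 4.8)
The plan is to derive this as a special case of \autoref{upperbound}, the only real work being to simplify the minimum that appears there. Here $N = p^d$ and $e = \exp G = p$, so $D(N/e) = \{1, p, \dots, p^{d-1}\}$ and $D(e) = \{1, p\}$, and therefore
\[D(N, e, r) = \{\, p^{a}p^{\varepsilon} \mid 0 \leq a \leq d-1,\ \varepsilon \in \{0, 1\},\ p^{a+1} \geq r \,\}.\]
Since $p^t < r \leq p^{t+1}$, the condition $p^{a+1} \geq r$ holds precisely when $a \geq t$ (note also $t \leq d-1$, since $p^t < r \leq p^d$). Hence $D(N, e, r) = \{p^t, p^{t+1}, \dots, p^d\}$.

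Next I would evaluate $f(d) := d\bigl(2\lceil r/d\rceil - 1\bigr)$ at each divisor in this set. At $d = p^t$ this is $f(p^t) = p^t\bigl(2\lceil r/p^t\rceil - 1\bigr)$, and here $2 \leq \lceil r/p^t\rceil \leq p$. For each $j \geq 1$ the bound $r \leq p^{t+1} \leq p^{t+j}$ gives $\lceil r/p^{t+j}\rceil = 1$, so $f(p^{t+j}) = p^{t+j}$, which over $j \geq 1$ is minimized at $j = 1$. Therefore
\[\min_{d \in D(N, e, r)} f(d) = \min\bigl\{\, p^t\bigl(2\lceil r/p^t\rceil - 1\bigr),\ p^{t+1} \,\bigr\} = p^t \min\bigl\{\, 2\lceil r/p^t\rceil - 1,\ p \,\bigr\},\]
and \autoref{upperbound} now yields exactly the claimed inequality for $\rho_G^-(r)$.

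There is no serious obstacle here: once \autoref{upperbound} is in hand the lemma is a mechanical computation, and the only point needing care is the bookkeeping that identifies $D(N, e, r)$. For use in the matching lower bound later, it is worth recording the extremal set explicitly. Treating $G$ as an $\F_p$-vector space, fix a $t$-dimensional subspace $V \leq G$ and a vector $v \notin V$ (possible since $t < d$), set $m = \lceil r/p^t\rceil \leq p$, and take $A = \bigcup_{i=0}^{m-1}(V + iv)$. Then the cosets $V + iv$ are distinct, so $|A| = mp^t \geq r$, and $A - A = \bigcup_{k=-(m-1)}^{m-1}(V + kv)$ is a union of $\min\{2m-1,\, p\}$ distinct cosets of $V$, so $|A - A| = p^t\min\{2m-1,\, p\}$. (When $2m - 1 \geq p$ this is simply the $(t+1)$-dimensional subspace $\langle V, v\rangle$.) This reproves the bound directly, and is essentially the construction in the proof of \autoref{upperbound} specialized to $G = (\Z/p\Z)^d$.
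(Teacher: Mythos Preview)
Your proof is correct and follows essentially the same approach as the paper: both invoke \autoref{upperbound}, identify $D(N,e,r)=\{p^t,p^{t+1},\ldots,p^d\}$ from $N=p^d$, $e=p$, and $p^t<r\le p^{t+1}$, and then simplify the resulting minimum to $p^t\min\{2\lceil r/p^t\rceil-1,\,p\}$. Your supplementary explicit construction via a coset arithmetic progression is also fine and mirrors the paper's remark following the lemma (where the paper instead uses the lexicographic initial segment).
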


\begin{proof}
	\par Using the notation of \autoref{upperbound}, we have $N = |G| = p^d$ and $e = \exp G = p$, so \begin{align*}
	D(N, e, r) &= \{d_1 d_2 \mid d_1 \in D(p^{d-1}), d_2 \in D(p), d_1p \geq r\} \\
	&= \{p^t, p^{t+1}, \ldots, p^{d-1}, p^d\}.
	\end{align*}
	By \autoref{upperbound}, we have \begin{align*}\min_{d \in D(N, e, r)} d \left(2 \left\lceil \frac{r}{d} \right \rceil - 1\right) &= \min\left\{p^t \left(2\left\lceil\frac{r}{p^t}\right\rceil - 1\right), p^{t+1}, \ldots, p^{d-1}, p^d\right\} \\
	&= p^t \min\left\{2\left\lceil\frac{r}{p^t}\right\rceil - 1, p\right\},\end{align*}
	as desired.
\end{proof}

\begin{remark}
	The proof of \autoref{uppervector} given above shows that \autoref{mainthm} agrees with the prediction given by \autoref{difference_conjecture}.
\end{remark}
\begin{remark}
	Here is an explicit example of a subset $A \subseteq G$ achieving the bound of \autoref{uppervector}. Put a total order $<$ on $\Z / p\Z$ by identifying it with $\{0, 1, \ldots, p-1\}$ in the usual way. Then, recall that $(\Z / p\Z)^d$ is totally ordered by the \textit{lexicographic order}, which is defined as follows: we say that $x = (x_1, \ldots, x_d)$ precedes $y = (y_1, \ldots, y_d)$ in the lexicographic order if for some $i$ we have $x_i < y_i$ and $x_j = y_j$ for $j < i$. Let $A$ be the set of the smallest $r$ elements of $(\Z / p\Z)^d$ in the lexicographic order. Then one can easily verify that \[|A - A| = p^t \min\left\{ 2\left\lceil\frac{r}{p^t}\right\rceil - 1, p\right\},\] which provides an alternative constructive proof of \autoref{uppervector}. It is worth noting that by \cite[Proposition~3.1]{sumsets_vectors}, the same set $A$ satisfies $|A + A| = \rho_G^+(r)$.
\end{remark}

\section{Independence of dimension}\label{sectionindependence}
The following result is step~\eqref{step_inv} in the proof of \autoref{mainthm}.

\begin{lemma}\label{invariance}
	Let $p$ be a prime and let $d_1 > d_2 \geq 0$ be integers. Let $G = (\Z / p\Z)^{d_1}$ and $H = (\Z / p\Z)^{d_2}$. Then $\rho_{G}^-(r) = \rho_{H}^-(r)$ for $1 \leq r \leq p^{d_2}$. 
\end{lemma}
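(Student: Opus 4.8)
The plan is to establish the two inequalities $\rho_G^-(r) \le \rho_H^-(r)$ and $\rho_H^-(r) \le \rho_G^-(r)$ separately. The first is immediate: since $d_2 < d_1$, the group $H = (\Z/p\Z)^{d_2}$ is isomorphic to a subgroup of $G = (\Z/p\Z)^{d_1}$, so any $A \subseteq H$ with $|A| = r$ may be regarded as a subset of $G$ with the same cardinality and the same difference set; taking $A$ to minimize $|A-A|$ inside $H$ yields $\rho_G^-(r) \le \rho_H^-(r)$. Note also the trivial bound $\rho_H^-(r) \le |H| = p^{d_2}$, obtained by taking for $A$ any $r$-element subset of $H$ (which exists because $r \le p^{d_2}$) and observing $A - A \subseteq H$; combining, $\rho_G^-(r) \le p^{d_2}$.

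For the reverse inequality I would take a set $A \subseteq G$ with $|A| = r$ and $|A - A| = \rho_G^-(r)$, and repeatedly project it onto quotient groups, each time decreasing the ambient rank by one without changing $|A|$ or increasing $|A - A|$, until it lands inside a copy of $H$. Concretely, suppose at some stage $A$ lives in a group $K \cong (\Z/p\Z)^{d'}$ with $d' > d_2$. The number of subgroups of $K$ of order $p$ is $\frac{p^{d'}-1}{p-1} = 1 + p + \cdots + p^{d'-1} \ge 1 + p + \cdots + p^{d_2} > p^{d_2} - 1 \ge |A-A| - 1$, and $|A-A|-1$ is the number of nonzero elements of $A - A$. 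Since each nonzero element of $A - A$ generates exactly one subgroup of order $p$, there is a subgroup $L \le K$ of order $p$ with $(A - A) \cap L = \{0\}$. The quotient map $\pi : K \to K/L$ is then injective on $A$, the quotient $K/L$ is isomorphic to $(\Z/p\Z)^{d'-1}$, and $\pi(A) - \pi(A) = \pi(A - A)$, so $|\pi(A)| = r$ and $|\pi(A) - \pi(A)| \le |A - A| \le p^{d_2}$. Thus replacing $A$ by $\pi(A)$ preserves all the relevant hypotheses.

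Starting from $A \subseteq G = (\Z/p\Z)^{d_1}$ and iterating $d_1 - d_2$ times produces a subset $A' \subseteq (\Z/p\Z)^{d_2} \cong H$ with $|A'| = r$ and $|A' - A'| \le \rho_G^-(r)$, which gives $\rho_H^-(r) \le \rho_G^-(r)$ and completes the proof.

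The only step that requires any care is the pigeonhole count that guarantees a usable subgroup $L$ at each stage, and this is precisely where the hypothesis $r \le p^{d_2}$ enters, through the bound $|A - A| \le p^{d_2}$; without it the difference set could be too large for a "good" $L$ to exist. Everything else is bookkeeping. (One could alternatively dispense with the iteration and apply a single union bound over the subgroups of $G$ of order $p^{d_1 - d_2}$ to locate one subgroup $W$ with $(A-A) \cap W = \{0\}$ at once, but the one-rank-at-a-time version avoids any estimate involving Gaussian binomial coefficients.)
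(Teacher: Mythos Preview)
Your proof is correct and follows essentially the same route as the paper: both arguments establish the nontrivial inequality $\rho_H^-(r)\le\rho_G^-(r)$ by using the pigeonhole bound $|A-A|\le p^{d_2}$ to locate a one-dimensional subspace $L$ disjoint from $(A-A)\setminus\{0\}$ and then projecting along $L$. The only cosmetic difference is that the paper first reduces to $d_1=d_2+1$ and performs a single projection, whereas you carry out the same one-step projection $d_1-d_2$ times.
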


\begin{proof}
	It suffices to consider the case that $d_1 = d_2 + 1$. Since $H$ embeds in $G$ as a subgroup, we have $\rho_{G}^-(r) \leq \rho_{H}^-(r)$, so it remains to show that $\rho_{H}^-(r) \leq \rho_{G}^-(r)$.
	\par Take a subset $A \subseteq G$ with $|A| = r$ and $|A - A| = \rho_{G}^-(r)$. Considering $G$ as a vector space of dimension $d_1 = d_2  + 1$ over the finite field $\F_p$, there are \[\frac{p^{d_1} - 1}{p - 1} = 1 + p + \cdots + p^{d_2} \geq p^{d_2}\] lines containing $0$ (that is, vector subspaces of dimension $1$) in $G$. On the other hand, there are only \[|A-A| - 1 \leq \rho_{G}^-(r) - 1 \leq \rho_{H}^-(r) - 1 < p^{d_2}\] nonzero elements of $A - A$. Since no two distinct lines in $G$ containing $0$ share a nonzero element, we conclude that there is a line $\ell$ in $G$ such that $\ell \cap (A - A) = \{0\}$.
	\par Considering $H$ as a vector space of dimension $d_2 = d_1 - 1$ over $\F_p$, fix an $\F_p$-linear transformation $\pi : G \to H$ whose kernel is the line $\ell$. Such a transformation $\pi$ exists because \[\dim_{\F_p} \ell + \dim_{\F_p} H = 1 + d_2 = d_1 = \dim_{\F_p} G.\] We claim that the restriction $\pi|_{A}$ is an injection. To show this, take $x, y \in A$ with $\pi(x) = \pi(y)$; we will show that $x = y$. Since $\pi$ is linear, we have $\pi(x - y) = 0$, so $x - y \in \ker \pi = \ell$. Therefore, we have $x - y \in \ell \cap (A - A) = \{0\}$. That is, we have $x = y$, as desired.
	\par Since $\pi|_{A}$ is an injection, we have $|\pi(A)| = |A| = r$, where $\pi(A)$ is the image of $A$ under the map $\pi$. Therefore \[\rho_{H}^-(r) \leq |\pi(A) - \pi(A)| = |\pi(A - A)| \leq |A - A| = \rho_{G}^{-}(r)\] as desired.
\end{proof}

% A simple projection argument.
\section{\texorpdfstring{The case $r \leq p^2$}{The case r≤p²}}\label{sectionsmall}
In this section, we show that the statement of \autoref{mainthm} holds when $r \leq p^2$, which is step \eqref{step_small} in the proof of \autoref{mainthm}.

\begin{lemma}\label{smallr}
	Let $p$ be a prime and let $d$ be a nonnegative integer. Let $G$ be the group $(\Z / p\Z)^d$. Then \[\rho_G^-(r) = p^t \min\left\{ 2\left\lceil\frac{r}{p^t}\right\rceil - 1, p\right\}\] for $1 \leq r \leq \min\{p^d, p^2\}$, where $t$ is the unique integer satisfying $p^t < r \leq p^{t+1}$.
\end{lemma}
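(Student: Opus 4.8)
The plan is to reduce, via \autoref{invariance}, to the base groups $\Z/p\Z$ and $(\Z/p\Z)^2$ and to extract the lower bound in each from the Cauchy--Davenport Theorem together with Kneser's theorem; the matching upper bound is \autoref{uppervector}, so only the inequality $\rho_G^-(r) \geq p^t\min\{2\lceil r/p^t\rceil - 1, p\}$ is at issue. The case $r = 1$ is immediate. If $2 \leq r \leq p$ then $t = 0$, and since $d \geq 1$ we may use \autoref{invariance} to get $\rho_{(\Z/p\Z)^d}^-(r) = \rho_{\Z/p\Z}^-(r)$; applying Cauchy--Davenport (\autoref{cauchydavenport}) to $A + (-A)$ shows $|A - A| \geq \min\{2r - 1, p\}$ for every $r$-element $A \subseteq \Z/p\Z$, which is exactly $p^0\min\{2\lceil r/p^0\rceil - 1, p\}$. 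If $p < r \leq p^2$ then $r \leq p^d$ forces $d \geq 2$, so \autoref{invariance} reduces us to $G = (\Z/p\Z)^2$; here $t = 1$, and writing $k = \lceil r/p\rceil \in \{2,\dots,p\}$ and $s = r - (k-1)p \in \{1,\dots,p\}$, it remains to prove $|A - A| \geq p\min\{2k-1, p\}$ for every $A \subseteq (\Z/p\Z)^2$ with $|A| = r$.

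I would split on whether $2k - 1 \geq p$ or $2k - 1 < p$. When $2k - 1 \geq p$ (in particular whenever $p = 2$) the target is $A - A = (\Z/p\Z)^2$, and this follows from an elementary observation: if a nonzero $v$ lay outside $A - A$, then inside each coset of the line $\langle v\rangle \cong \Z/p\Z$ the trace of $A$ would contain no two elements differing by $v$, hence (identifying $v$ with $1$) would be a subset of $\Z/p\Z$ with no two cyclically consecutive elements and so have at most $\lfloor p/2\rfloor$ elements, forcing $r = |A| \leq p\lfloor p/2\rfloor$; but $2k - 1 \geq p$ gives $k - 1 \geq \lfloor p/2\rfloor$, hence $r > (k-1)p \geq p\lfloor p/2\rfloor$, a contradiction. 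When $2k - 1 < p$ (so $p \geq 5$) I would bring in Kneser's theorem. Let $H = \operatorname{stab}(A - A)$, so $|H| \in \{1, p, p^2\}$. If $|H| = p^2$ we are done. If $|H| = p$ then $A - A$ is a union of cosets of $H$, so $|A - A| = p\,|\pi_H(A) - \pi_H(A)|$ for the quotient map $\pi_H \colon (\Z/p\Z)^2 \to (\Z/p\Z)^2/H \cong \Z/p\Z$; since $A$ meets at least $k$ of the $p$ cosets of $H$, Cauchy--Davenport gives $|\pi_H(A) - \pi_H(A)| \geq 2k - 1$, as needed. If $|H| = \{0\}$ then Kneser's theorem gives $|A - A| \geq 2r - 1$, which already reaches the target $p(2k-1)$ once $s \geq (p+1)/2$.

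The one remaining case --- $\operatorname{stab}(A - A) = \{0\}$, $2k - 1 < p$, and $s \leq (p-1)/2$ --- is the crux and, I expect, the main obstacle: here Kneser's bound $2r - 1$ falls short of $p(2k-1)$ by $p - 2s + 1 \geq 2$, and one genuinely needs structural information about a set with a small difference set. The plan is to assume for contradiction that $|A - A| \leq p(2k-1) - 1$; an arithmetic check (the difference $3r - 4 - (p(2k-1)-1)$ equals $(k-2)p + 3s - 3 \geq 0$, with equality at $k = 2$, $s = 1$) shows $p(2k-1) - 1 \leq 3r - 4$, so $|A - A| \leq 3|A| - 4$. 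A Freiman-type $3k-4$ structure theorem for difference sets --- for subsets of $\Z/p\Z$ such theorems are provable by repeated Cauchy--Davenport-type arguments (Dyson transforms), matching the outline's "apply Cauchy--Davenport repeatedly" --- then forces $A$ to lie in a single one-dimensional arithmetic progression or in a coset of a proper subgroup of $(\Z/p\Z)^2$. But an arithmetic progression in $(\Z/p\Z)^2$ has at most $p$ distinct terms and a proper subgroup has order at most $p$, so $|A| \leq p < r$, a contradiction; hence $|A - A| \geq p(2k-1)$. The parts of the write-up requiring care are therefore isolating exactly the form of $3k-4$-type input that applies in $(\Z/p\Z)^2$ (rather than only in $\Z/p\Z$) and verifying the inequality $|A - A| \leq 3|A| - 4$ at the boundary; the rest is bookkeeping with Cauchy--Davenport and Kneser.
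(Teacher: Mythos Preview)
Your reductions via \autoref{invariance} and the handling of the easy cases are correct, and in places cleaner than the paper's: the argument for $2k-1\ge p$ via independent sets in the $p$-cycle is a nice alternative, and dispatching a nontrivial stabilizer by Kneser plus Cauchy--Davenport on the quotient is fine. The gap is precisely where you flag it.

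In the residual case (trivial stabilizer, $2k-1<p$, $s\le (p-1)/2$) you assume $|A-A|\le p(2k-1)-1\le 3|A|-4$ and then invoke a ``Freiman $3k-4$ structure theorem for difference sets'' to force $A$ into a line. No such theorem is available off the shelf for $(\Z/p\Z)^2$. Freiman's dimension lemma (and Ruzsa's asymmetric version) do give $|A-A|\ge 3|A|-3$ for two-dimensional $A$ in \emph{torsion-free} groups, but transplanting this to $(\Z/p\Z)^2$ requires rectification, which needs the relevant set to fit, after translation, in a box of side $<p/2$; once $|A|>p$ this is not automatic, and indeed $|A-A|\ge 2|A|-1>p$ already rules out the naive lift. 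The known $3k-4$ results in cyclic groups (Freiman--Vosper, Serra--Z\'emor, Lev) exploit one-dimensional order and do not promote to $(\Z/p\Z)^2$ by any routine mechanism. So the sentence ``then forces $A$ to lie in a single one-dimensional arithmetic progression or in a coset of a proper subgroup'' is, as stated, exactly the missing lemma; you have relocated the difficulty rather than removed it.

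The paper takes a different route. It fixes a direction $\ell_0$ by a pigeonhole count on collinear pairs, slices $A$ into fibers $A\cap\ell_x$, and applies Cauchy--Davenport twice: once inside fibers (giving $|(A-A)\cap\ell_{x_1-x_2}|\ge \min\{|A\cap\ell_{x_1}|+|A\cap\ell_{x_2}|-1,p\}$) and once on the level sets $\Lambda_k=\{x:|A\cap\ell_x|\ge k\}$. This produces the inequalities $\mu_{i+j-1}\ge\min\{\lambda_i+\lambda_j-1,p\}$ between the fiber-size profile $(\lambda_k)$ and the profile $(\mu_k)$ of $A-A$, and the remaining purely numerical statement is \autoref{sumlemma2}, proved in the appendix by applying Freiman's dimension lemma to the \emph{Ferrers diagram} of $(\lambda_k)$ inside $\Z^2$, where there is no torsion obstruction. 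That is where ``apply Cauchy--Davenport repeatedly'' actually lands---not as a global structure theorem for $A$, but as iterated Cauchy--Davenport on the fiber statistics. If you want to salvage your outline, the honest task is to prove the two-dimensional difference-set inequality $|A-A|\ge 3|A|-3$ directly under your hypotheses; that is essentially equivalent in difficulty to what the paper does.
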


The following lemma will be instrumental in the proof of \autoref{smallr}.

\begin{restatable}{lemma}{sumlemma}\label{sumlemma2}
	Let $p$ be a prime, and let $m$ and $n$ be integers with $n \geq 1$ and $n+2 \leq m \leq (p-1)/2$. Let $\lambda = (\lambda_1, \ldots, \lambda_m)$ be a sequence of integers with $p \geq \lambda_1 \geq \cdots \geq \lambda_m > 0$ and $\sum_{k=1}^m \lambda_k \geq n p + 1$. Let $\mu = (\mu_1, \ldots, \mu_{2m - 1})$ be a sequence of integers such that $\mu_{i + j - 1} \geq \min\{\lambda_i + \lambda_j - 1, p\}$ for $1 \leq i, j \leq m$. Then \[\sum_{k = 1}^{2m-1} \mu_k \geq (2n + 1)p.\]
\end{restatable}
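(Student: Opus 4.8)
The plan is to reduce the statement to a purely numerical inequality and then attack it by induction on $n$, using a ``peeling'' step to lower $n$ whenever the largest parts are close to $p$, and crude but careful estimates otherwise. Since $\mu_k \ge g(k) := \max\{\min\{\lambda_i+\lambda_j-1,p\} : i+j-1=k,\ 1\le i,j\le m\}$ for every $k$, it suffices to prove $\sum_{k=1}^{2m-1} g(k) \ge (2n+1)p$. Writing $M(k) := \max\{\lambda_i+\lambda_j : i+j=k+1,\ 1\le i,j\le m\}$, one has $g(k) = \min\{M(k)-1,p\}$, and the pairs $(1,k)$, $(i,i)$ and $(k+1-m,m)$ give the useful bounds $M(k)\ge \lambda_1+\lambda_k$ (for $k\le m$), $M(k)\ge 2\lambda_{\lceil(k+1)/2\rceil}$, and $M(k)\ge \lambda_{k+1-m}+\lambda_m$ (for $k\ge m$). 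Each $M(k)$, hence each $g(k)$, is nondecreasing in every $\lambda_i$, so I would first reduce to $\sum_{i=1}^m\lambda_i = np+1$; together with $m\le (p-1)/2$ this already forces $\lambda_1 \ge (np+1)/m > 2n$, so the parts cannot all be small.

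Next I would induct on $n$. The inductive engine is available whenever we can guarantee $\mu_1+\mu_2 \ge 2p$ — in particular when $\lambda_1 = p$, since then the pairs $(1,1)$ and $(1,2)$ force $\mu_1,\mu_2\ge p$. In that situation the truncated sequence $\lambda' := (\lambda_2,\dots,\lambda_m)$ has length $m-1 \ge (n-1)+2$, parts at most $p$, and sum at least $(n-1)p+1$, and the shifted data $\mu'_\ell := \mu_{\ell+2}$ satisfies $\mu'_{i+j-1}\ge \min\{\lambda'_i+\lambda'_j-1,p\}$ (note one must shift by $2$, not $1$, for the indices to match — which is exactly why we need $\mu_1+\mu_2\ge 2p$). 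So for $n\ge 2$ the inductive hypothesis gives $\sum_{k=3}^{2m-1}\mu_k \ge (2n-1)p$, and adding $\mu_1+\mu_2\ge 2p$ finishes. The base case $n=1$ with $\lambda_1=p$ is immediate from the three pairs $(1,1),(1,2),(1,3)$ (the last valid since $m\ge 3$), which already yield $\mu_1+\mu_2+\mu_3\ge 3p$.

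This reduces everything to the case $\lambda_1 \le p-1$ (together with $n=1$ when no peeling applied), which is the heart of the proof and where I expect the real work to be. I would bound $\sum_{k=1}^{2m-1}g(k)$ directly, splitting on the size of $\lambda_1$. If $2\lambda_1-1 < p$, no slot is capped, so $g(k) = M(k)-1$ for all $k$; summing the diagonal bounds $M(2i-1)\ge 2\lambda_i$ and near-diagonal bounds $M(2i)\ge \lambda_i+\lambda_{i+1}$ gives $\sum_k g(k)\ge 4S-\lambda_1-\lambda_m-(2m-1)$, which one then closes using $S=np+1$, $\lambda_1\le (p-1)/2$, $m\le (p-1)/2$ and the auxiliary relations $S\ge m\lambda_m$ and $S\le (m-1)\lambda_1+\lambda_m$. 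If instead $(p+1)/2 \le \lambda_1 \le p-1$, let $w\ge 1$ count the indices $i$ with $\lambda_i \ge p+1-\lambda_1$; then $g(k)=p$ for $k\le w$ via $(1,k)$, and for the remaining slots one uses $g(k)\ge \min\{\lambda_1+\lambda_k-1,p\}$ and, after reindexing, $g(k)\ge \lambda_{k+1-m}+\lambda_m-1\ge \lambda_k$, together with the fact that $\sum_{i>w}\lambda_i$ must supply the bulk of $np+1-wp$, forcing enough uncapped slots of size about $\lambda_1$ to cover the deficit; again the constraint $m\le (p-1)/2$ is what makes the count work.

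The main obstacle is precisely this last case analysis for $\lambda_1\le p-1$: the estimates above are correct in spirit but miss $(2n+1)p$ by $O(1)$-sized amounts, so one must track how many slots $k$ have $M(k)-1$ strictly above $p$ (the ``wasted'' capping) against those that do not, and balance the mass lost to capping with the surplus hidden in $S$, leaning on integrality and on $m\le (p-1)/2$ to close the residual gaps. That no slack can be discarded carelessly is shown by the configuration $\lambda=(p-1,1,1)$ with $n=1$, $m=3$ (or, before peeling, $(p,\dots,p,p+1-(m-n),1,\dots,1)$ with $n-1$ copies of $p$), for which the inequality is an equality.
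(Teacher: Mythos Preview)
Your peeling-and-induction scaffold is sound, and is close in spirit to the paper's induction on $m$ (the paper peels whenever $\lambda_1+\lambda_2-1>p$, a weaker trigger than your $\lambda_1=p$, which leaves strictly less to handle by hand). The gap is exactly where you say the ``heart'' is: the residual case $\lambda_1\le p-1$. Your diagonal/near-diagonal estimate $\sum_k g(k)\ge 4S-\lambda_1-\lambda_m-(2m-1)$ is correct, but it does not close even in the completely uncapped sub-case $2\lambda_1-1<p$. For a concrete failure take $p=13$, $n=1$, $m=6$ and $\lambda=(6,2,2,2,1,1)$: then $S=np+1=14$, no slot is capped, and your bound gives $4\cdot 14-6-1-11=38<39=3p$. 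Your proposed repair --- tracking the ``wasted'' capping against surplus in $S$ --- cannot help here, since in this sub-case there is no capping at all; nor do the auxiliary relations $S\ge m\lambda_m$ and $S\le (m-1)\lambda_1+\lambda_m$ recover the missing unit. So the plan as written does not go through.

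What the paper supplies for this step is a genuinely different idea: interpret $\lambda$ as a Ferrers diagram $F(\lambda)\subset\Z^2$ with $|F(\lambda)|=S$, observe that $F(\lambda)+F(\lambda)\subseteq F(\mu)$, and apply Freiman's two-dimensional lemma ($|A+A|\ge 3|A|-3$ whenever $A$ is not contained in a line) to get $\sum_k\mu_k\ge 3S-3\ge 3np\ge (2n+1)p$. This disposes of the entire uncapped case in one stroke, and is also the workhorse in the borderline case where only $\mu_1$ is capped. If you want to avoid citing Freiman, you would need to reprove the $3|A|-3$ bound for this special family of staircase sets; the purely one-dimensional bounds you are using (pairs $(1,k)$, $(i,i)$, $(k{+}1{-}m,m)$) do not see the two-dimensional structure and cannot reach $3S-3$ in general.
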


\begin{proof}
	We defer the proof to \autoref{appendixproof}.
\end{proof}

\begin{proof}[Proof of \autoref{smallr}]
	\setcounter{case}{0}
	\par By \autoref{uppervector}, we have \[\rho_G^-(r) \leq p^t \min\left\{ 2\left\lceil\frac{r}{p^t}\right\rceil - 1, p\right\},\] so it remains to show that \begin{equation}\label{smallrlower}\rho_G^-(r) \geq p^t \min\left\{ 2\left\lceil\frac{r}{p^t}\right\rceil - 1, p\right\}.\end{equation}
	
	\par If $r \leq p$, then this follows directly from \autoref{invariance} and the Cauchy-Davenport~Theorem. Thus, we may assume $r > p$.
	
	\par By \autoref{invariance}, we may assume that $d = 2$, so $G = (\Z / p\Z)^2$. If $p = 2$, then the theorem follows easily from enumerating all possible values of $r$ and all sets $A \subseteq G$, so assume that $p > 2$. Let \[r' = \begin{cases}
	p\left(\left\lceil r/p\right\rceil - 1\right) + 1 & \mbox{if $r \leq p(p-1)/2$} \\
	p(p-1)/2 + 1 & \mbox{if $r > p(p-1)/2$} 
	\end{cases}.\] Since $r \geq r'$, replacing $r$ with $r'$ cannot increase the left-hand side of \eqref{smallrlower}, and it is easy to check that this replacement leaves the right-hand side unchanged. Therefore, we may assume that $r = np + 1$ where $1 \leq n \leq (p-1)/2$. Take a subset $A \subset G$ with $|A| = r$; we will show that \[|A - A| \geq (2n+1)p = p^t \min\left\{ 2\left\lceil\frac{r}{p^t}\right\rceil - 1, p\right\}.\]
	
	\par Identify $G$ with the two-dimensional vector space $\F_p^2$ over the field $\F_p$. We will now count the two-element subsets of $A$ in two ways. By definition, the number of two-element subsets of $A$ is the binomial coefficient $\binom{np+1}{2}$. On the other hand, every two-element subset of $A$ is contained in a unique line (that is, affine subspace of $G$ of dimension $1$), so we can count these subsets according to the lines containing them. This yields \begin{equation}\label{linecounting}\sum_{\ell \subset G} \binom{|A \cap \ell|}{2} = \binom{np+1}{2}\end{equation} where the sum is over all lines $\ell \subset G$. Every line in $G$ is parallel to exactly one line $\ell' \subset G$ containing $0$, so \eqref{linecounting} can be rewritten as \[\sum_{\substack{\ell' \subset G \\ \ell' \ni 0}} \sum_{\substack{\ell \subset G \\ \ell \parallel \ell'}} \binom{|A \cap \ell|}{2} = \binom{np+1}{2}\] where the outer sum is over all lines $\ell' \subset G$ containing $0$, and the inner sum is over all lines $\ell \subset G$ parallel to $\ell'$. Since there are exactly $p+1$ lines in $G$ containing $0$, there is a particular line $\ell_0 \subset G$ containing $0$ such that \[\sum_{\substack{\ell \subset G \\ \ell \parallel \ell_0}} \binom{|A \cap \ell|}{2} \geq \frac{1}{p+1} \binom{np+1}{2}.\] We may assume, by applying an $\F_p$-linear change of coordinates, that $\ell_0$ is the line $\{(0, y) \mid y \in \F_p\} \subset \F_p^2 = G$. For any $x \in \F_p$, define the line \[\ell_{x} = \{(x, y) \mid y \in \F_p\}.\] Then, the lines in $G$ parallel to $\ell_0$ are exactly the lines $\ell_x$ for $x \in \F_p$. Let \[m = \max_{x \in \F_p} |A \cap \ell_x|.\] Since \[\sum_{x \in \F_p} |A \cap \ell_x| = |A| = np+1,\] we have $m \geq \lceil (np+1)/p\rceil = n+1$. We consider three cases, depending on whether $m \geq (p+1)/2$, or $m = n+1$, or $n+2 \leq m \leq (p-1)/2$.
	
	\begin{case}[$m \geq (p+1)/2$]
		Take $x \in \F_p$ such that $|A \cap \ell_x| = m$. Since $\ell_x$ is a translate of $\ell_0$, which is isomorphic as a group to $\Z / p\Z$, the Cauchy-Davenport~Theorem applies to the difference $(A \cap \ell_x) - (A \cap \ell_x) \subseteq \ell_0$, yielding \[|(A - A) \cap \ell_0| \geq |(A \cap \ell_x) - (A \cap \ell_x)| \geq \min\{2m-1, p\} = p.\] (Essentially, we are applying the Cauchy-Davenport~Theorem only to the second coordinates of the elements of $A \cap \ell_x$, which lie in $\Z / p\Z$.) That is, the line $\ell_0$ is a subset of $A - A$.
		\par Now, take \textit{any} line $\ell' \subset G$ containing $0$. There is a line $\ell$ parallel to $\ell'$ such that $|A \cap \ell| \geq \lceil (np+1)/p \rceil = n+1$. Since $\ell$ is a translate of $\ell'$, which is isomorphic as a group to $\Z / p\Z$, the Cauchy-Davenport~Theorem again applies to the difference $(A \cap \ell) - (A \cap \ell) \subseteq \ell'$, yielding \[|(A - A) \cap \ell'| \geq |(A \cap \ell) - (A \cap \ell)| \geq \min\{2(n+1)-1, p\} = 2n+1.\]
		
		\par Since $G \setminus \{0\}$ is equal to the disjoint union \[\bigsqcup_{\substack{\ell' \subset G \\ \ell' \ni 0}}(\ell' \setminus \{0\})\] over all lines $\ell' \subset G$ containing $0$, we conclude
		\begin{align*}
		|A - A| &= 1 + \sum_{\substack{\ell' \subset G \\ \ell' \ni 0}} (|(A - A) \cap \ell'| - 1) \\
		&\geq 1 + (p-1) + p\cdot((2n+1) - 1) \\
		&= (2n+1)p
		\end{align*} which is the desired inequality.
	\end{case}
	
	\begin{case}[$m = n+1$]
		Let $S = \{x \in \F_p \mid |A \cap \ell_x| = n+1\}$ and let $s = |S|$. For each $x \in \F_p \setminus S$ we have $|A \cap \ell_x| \leq n$, so \begin{align*}
		\frac{1}{p+1} \binom{np+1}{2} &\leq \sum_{x \in \F_p} \binom{|A \cap \ell_x|}{2} \\
		&= s \binom{n+1}{2} + \sum_{x \in \F_p \setminus S} \binom{|A \cap \ell_x |}{2} \\
		&\leq s\binom{n+1}{2} + \sum_{x \in \F_p \setminus S} \frac{n-1}{2} |A \cap \ell_x| \\
		&= s \binom{n+1}{2} + \frac{n-1}{2} ((np + 1) - (n+1)s),  
		\end{align*}
		Simplifying this inequality and using the bound $n \leq (p-1)/2$, we obtain \begin{align*}
		s &\geq \frac{p+1-n}{p+1} \cdot \frac{np+1}{n+1} \\
		&\geq \frac{p+1-(p-1)/2}{p+1} \cdot \frac{p(p-1)/2+1}{(p-1)/2+1} \\
		&= \frac{p-1}{2} + \frac{p^2 + 7}{2(p+1)^2} \\
		&> \frac{p-1}{2}.
		\end{align*} Thus $s \geq (p+1)/2$, so by the Cauchy-Davenport~Theorem, we have $|S - S| \geq \min\{2s - 1, p\} = p$, so $S - S = \F_p$.
		\par Now, take any $x \in \F_p$. Since $x \in S - S$, there is $y \in \F_p$ such that $y, x+y \in S$. By the Cauchy-Davenport~Theorem again, we have \[|(A - A) \cap \ell_x| \geq |A \cap \ell_{x+y} - A \cap \ell_y| \geq \min\{2(n+1) -1, p\} = 2n + 1.\] Summing over all $x \in \F_p$ yields \[|A - A| = \sum_{x \in \F_p} |(A - A) \cap \ell_x| \geq (2n+1)p\] as desired.
	\end{case}
	
	\begin{case}[$n+2 \leq m \leq (p-1)/2$]
		For $1 \leq k \leq p$, define
		\begin{align*}
		\Lambda_k &= \{x \in \F_p \mid |A \cap \ell_x| \geq k\} \\ 
		M_k &= \{x \in \F_p \mid |(A - A) \cap \ell_x| \geq k\} \\
		\lambda_k &= |\Lambda_k| \\
		\mu_k &= |M_k|
		\end{align*}
		By definition, we have $p \geq \lambda_1 \geq \cdots \geq \lambda_m > 0$ and $p \geq \mu_1 \geq \cdots \geq \mu_p \geq 0$. We have \[\sum_{k=1}^m \lambda_k = \sum_{x \in \F_p} |A \cap \ell_x| = |A| = ap+1\] because each line $\ell_x$ contributes exactly $|A \cap \ell_x|$ to the sum. Similarly \[\sum_{k=1}^p \mu_k = \sum_{x \in \F_p} |(A-A) \cap \ell_{x}| = |A - A|.\]
		
		\par We claim that $M_{i+j-1} \supseteq \Lambda_i - \Lambda_j$ for $1 \leq i, j \leq m$. To show this, take $x_1 \in \Lambda_i$ and $x_2 \in \Lambda_j$; we will show that $x_1 - x_2 \in M_{i+j-1}$. By the Cauchy-Davenport~Theorem, we have
		\begin{align*}
		|(A - A) \cap \ell_{x_1 - x_2}| &\geq |A \cap \ell_{x_1} - A \cap \ell_{x_2}| \\
		&\geq \min\{|A \cap \ell_{x_1}| + |A \cap \ell_{x_2}| - 1, p\} \\
		&\geq \min\{i+j-1, p\} \\
		&= i+j-1
		\end{align*}
		where the last equality follows from the bound $i, j \leq m \leq (p-1)/2$. That is, we have $x_1 - x_2 \in M_{i+j-1}$, as desired.
		\par By the Cauchy-Davenport~Theorem again, we conclude \begin{equation}\label{lambdaineq}\mu_{i+j-1} = |M_{i + j - 1}| \geq |\Lambda_i - \Lambda_j| \geq \min\{\lambda_i + \lambda_j - 1, p\}\end{equation} for $1 \leq i, j \leq m$.
		\par Therefore, the conditions of \autoref{sumlemma2} are satisfied, so
		\[
			|A - A| = \sum_{k=1}^p \mu_k \geq (2n+1)p
		\]
		as desired.
		\qedhere
	\end{case}
\end{proof}

\section{Completing the proof of \autoref{mainthm}} \label{sectionmainthm}

\par Before proceeding to the proof of \autoref{mainthm}, we prove a general lemma about sets in vector spaces over finite fields.
	
\begin{lemma}\label{intersectionlemma}
	Let $p$ be a prime and let $m$ be an integer. Let $G$ be a vector space over the field $\F_p$ of dimension $d \geq 3$, and let $S$ be a subset of $G$ such that \[|S \cap H| \geq mp^{d-2}\] for each vector hyperplane $H$ (that is, vector subspace of dimension $d - 1$) in $G$. Then $|S| \geq mp^{d-1}$.
\end{lemma}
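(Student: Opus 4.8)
The plan is a double-counting argument, but performed after quotienting $G$ by a well-chosen line rather than directly in $G$: a naive incidence count between $S$ and the hyperplanes of $G$ is defeated by the origin, which lies in every hyperplane. We may assume $m\ge 1$, the conclusion being trivial for $m\le 0$. For an $n$-dimensional $\F_p$-vector space write $N_n=\tfrac{p^n-1}{p-1}$ for its number of one-dimensional subspaces, which is also its number of hyperplanes, and recall that $0$ lies in all $N_n$ of its hyperplanes while any nonzero vector lies in exactly $N_{n-1}$ of them. Put $a=\min_\ell|S\cap\ell|$, where $\ell$ ranges over the one-dimensional subspaces of $G$, and split into two cases. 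If $a\ge m+1$, then every one-dimensional subspace $\ell$ of $G$ satisfies $|(S\setminus\{0\})\cap\ell|\ge|S\cap\ell|-1\ge m$; since distinct lines meet only at $0$, the sets $(S\setminus\{0\})\cap\ell$ partition $S\setminus\{0\}$, so summing over all $N_d$ of them gives $|S|\ge mN_d=m(1+p+\dots+p^{d-1})>mp^{d-1}$, settling this case.

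The substantive case is $a\le m$. Fix a one-dimensional subspace $\ell$ with $|S\cap\ell|=a$, let $q\colon G\to\bar G:=G/\ell$ be the quotient map (so $\dim\bar G=d-1\ge 2$), and define $f(\bar x)=|S\cap q^{-1}(\bar x)|$, so that $\sum_{\bar x\in\bar G}f(\bar x)=|S|$ and $f(0)=a$. For each hyperplane $\bar H$ of $\bar G$ the preimage $q^{-1}(\bar H)$ is a hyperplane of $G$, so the hypothesis gives $\sum_{\bar x\in\bar H}f(\bar x)=|S\cap q^{-1}(\bar H)|\ge mp^{d-2}$. Summing over all $N_{d-1}$ hyperplanes $\bar H$ of $\bar G$ and regrouping by $\bar x$ --- where $f(\bar x)$ is counted $N_{d-1}$ times if $\bar x=0$ and $N_{d-2}$ times otherwise --- yields
\[
a N_{d-1}+(|S|-a)N_{d-2}\ge N_{d-1}mp^{d-2}.
\]
Setting $R:=N_{d-1}/N_{d-2}=\tfrac{p^{d-1}-1}{p^{d-2}-1}\ge 1$, this rearranges to $|S|\ge a+R(mp^{d-2}-a)$, whose right-hand side is non-increasing in $a$; hence $a\le m$ gives $|S|\ge m+R(mp^{d-2}-m)=m+Rm(p^{d-2}-1)=m+m(p^{d-1}-1)=mp^{d-1}$, using the identity $R(p^{d-2}-1)=p^{d-1}-1$. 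This completes the proof.

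I do not expect a genuine obstacle; the one point requiring care is that the cutoff between the two cases must be placed at exactly $|S\cap\ell|=m$, because $a=m$ is precisely the value at which the bound $|S|\ge a+R(mp^{d-2}-a)$ degrades to $|S|\ge mp^{d-1}$, so a coarser threshold would lose one of the two cases. It is also worth recording, though nothing needs to be proved, that the lemma is vacuous unless $1\le m\le p$: if $m>p$ then $mp^{d-2}>p^{d-1}=|H|$, so no set $S$ satisfies the hypothesis.
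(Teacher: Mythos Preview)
Your proof is correct. Both arguments are double-counting over a pencil of hyperplanes through a well-chosen subspace $W$ whose intersection with $S$ is small, but the choices of $W$ differ. The paper takes $W=V_0$ of codimension~$2$, located by an averaging (expected-value) argument showing some such $V_0$ satisfies $|S\cap V_0|\le mp^{d-3}$; then only $p+1$ hyperplanes contain $V_0$, each element of $S\setminus V_0$ is counted once and each element of $S\cap V_0$ is counted $p+1$ times, and the arithmetic collapses in one line to $|S|\ge mp^{d-1}$. You instead take $W=\ell$ of dimension~$1$ with minimal $|S\cap\ell|$, handle the possibility that this minimum is large by a separate direct count over lines (your Case~1), and otherwise quotient by $\ell$ and count over all $N_{d-1}$ hyperplanes of $G/\ell$, balancing the bookkeeping via the ratio $R=N_{d-1}/N_{d-2}$ and the identity $R(p^{d-2}-1)=p^{d-1}-1$. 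Your route avoids the probabilistic step entirely, at the cost of a case split and slightly heavier algebra; the paper's route is a single clean computation once $V_0$ is in hand. When $d=3$ the two choices of $W$ coincide, so the distinction is really about how the auxiliary subspace is found. One tiny wording point: in Case~1 the partition gives $|S\setminus\{0\}|\ge mN_d$, from which $|S|\ge mN_d>mp^{d-1}$ follows; the conclusion is unaffected.
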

\begin{proof}[Proof of \autoref{intersectionlemma}]
	 Assume for the sake of contradiction that $|S| < mp^{d - 1}$. We first claim that there is a $(d-2)$-dimensional vector subspace $V_0 \subset G$ with $|S \cap V_0| \leq mp^{d-3}$. To show this, take a $(d-2)$-dimensional vector subspace $V \subset G$ uniformly at random. It is clear that $V$ has $p^{d-2} - 1$ nonzero elements, that $G$ has $p^d-1$ nonzero elements, and that each nonzero element of $G$ is in $V$ with equal probability. Therefore, the probability that $x \in V$ for a fixed $x \in G \setminus \{0\}$ is \[\frac{p^{d-2} - 1}{p^{d} - 1}.\] Clearly, the probability that $0 \in V$ is $1$. Therefore, by the linearity of expectation, the expected value of $|S \cap V|$ is given by
	 \begin{align*}
	 \mathbb{E}[|S \cap V|] &= 1 + (|S| - 1) \frac{p^{d-2} - 1}{p^{d} - 1} \\
	 &< 1 + (mp^{d-1} - 1) \frac{p^{d-2} - 1}{p^{d} - 1} \\
	 &= m p^{d-3} + \frac{(p^2 - 1)(p-m)p^{d-3}}{p^{d} - 1} \\
	 &< m p^{d-3} + 1.
	 \end{align*}
	 Since $mp^{d-3}$ is an integer, we conclude that there is a particular $(d-2)$-dimensional vector subspace $V_0 \subset G$ with $|S \cap V_0| \leq mp^{d-3}$.
	 
	 \par Finally, consider the integer $N$ defined by the sum \[N = \sum_{H} |S \cap H|\] where $H$ ranges over all vector hyperplanes with $V_0 \subset H \subset G$. Such hyperplanes $H$ are in bijection with lines through the origin in the two-dimensional quotient space $G / V_0$, so there are $p+1$ of them. Therefore, by the assumption of the theorem, we have \[N \geq \sum_{H} mp^{d-2} = (p+1)mp^{d-2}.\] On the other hand, the sum defining $N$ counts every element of $S \setminus V_0$ once and every element of $S \cap V_0$ exactly $p+1$ times, so \[N = |S| + p |S \cap V_0|.\] Therefore, we have \[|S| = N - p|S \cap V_0| \geq (p+1) mp^{d-2} - p \cdot mp^{d-3} = mp^{d-1},\] which contradicts our assumption that $|S| < mp^{d-1}$.
\end{proof}

\par We are now ready to restate and prove \autoref{mainthm}.

\mainthm*

\begin{proof}
	\par We proceed by induction on $r$. If $t < 2$, then the result follows from \autoref{smallr}, so we may assume $t \geq 2$. By \autoref{invariance}, we may also assume that $d = t+1$. Let $m = \min\{2 \left\lceil r/p^t \right \rceil - 1, p\}$. We wish to show that $\rho_G^-(r) = m p^t$. By \autoref{uppervector}, we have $\rho_G^-(r) \leq mp^t$, so it remains to show that $\rho_G^-(r) \geq m p^t$. Let $A$ be a subset of $G$ with $|A| = r$; we will show that $|A - A| \geq mp^t$.
	
	\par Consider $G$ as a vector space of dimension $d = t + 1 \geq 3$ over $\F_p$. By \autoref{intersectionlemma} applied to $S = A - A$, it suffices to show that $|(A - A) \cap H| \geq mp^{t-1}$ for each vector hyperplane $H \subset G$. For this, note that there are exactly $p$ distinct translates $H + x$, where $x \in G$, and that the entire space $G$ is the disjoint union of these $p$ translates. Therefore, there exists $x_0 \in G$ such that $|A \cap (H + x_0)| \geq \lceil r / p \rceil$. By the inductive hypothesis, \[|(A - A) \cap H| \geq |(A \cap (H + x_0)) - (A \cap (H + x_0))| \geq \rho_{H}^-(\lceil r/p \rceil) = mp^{t-1}\] as desired.
\end{proof}

\section{Applications to signed sumsets} \label{bajnokmatzkeproof}
In this section, we prove \autoref{bajnokmatzkeconjecture}. In particular, we will show that it is a consequence of the following more general result. The notations $\rho_\pm(G, m, r)$ and $r_\pm A$ used in this section are defined in \cite{signed_sumsets_elementary}.
\begin{lemma}\label{signedsumsetlemma}
	Let $G$ be a finite abelian group of order $N$. Then \[\rho_\pm(G, m, 2) \geq \min\{\rho_G^-(m), \rho_G^-(2m) - 1\}\] for $1 \leq m \leq N/2$.
\end{lemma}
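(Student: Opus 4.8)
The plan is to start from an arbitrary signed sumset $r_{\pm}A$ with $|A| = 2$, say $A = \{a, b\}$, and to understand the structure of the signed sumset explicitly. By definition, an element of $r_{\pm}A$ (in the notation of \cite{signed_sumsets_elementary}) is a sum of $r$ terms, each of which is $\pm a$ or $\pm b$; grouping the $+a$, $-a$, $+b$, $-b$ contributions, every element of $r_{\pm}A$ has the form $j a + k b$ where $j$ and $k$ are integers with $|j| + |k| \le r$ and $j + k \equiv r \pmod 2$. Conversely every such $ja + kb$ is attained. So $r_{\pm}A$ is the image under the homomorphism $(j,k) \mapsto ja + kb$ of a fixed ``diamond'' region in $\Z^2$. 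Here we only need $r = 2$, so the relevant exponents $(j,k)$ are those with $|j|+|k| \le 2$ and $j+k$ even, namely $(0,0)$, $(\pm 2, 0)$, $(0, \pm 2)$, and $(\pm 1, \pm 1)$.

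Next I would rewrite this in terms of difference sets. Let $c = a - b$. The point is that $2_{\pm}A$ (the case $r=2$) contains the three ``rows'' $2a + \{0, \pm c, \pm 2c\}\cdot(\text{nothing})$—more precisely, writing $ja + kb = (j+k)b + j c$ doesn't immediately help; instead observe that $2_{\pm}A = \{2a, 2b, a+b, a-b, b-a, -a-b, -2a, -2b\}$. Translating by $-(a+b)$ (an injection on $G$), this set becomes $\{a-b, b-a, 0, a - 3b, 3b - a, \ldots\}$—this is getting messy, so the cleaner approach is: the set $\{j a + k b : |j|+|k| \le 2,\ 2 \mid j+k\}$ equals $\{0\} \cup (2B - 2B)$-type pieces. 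In fact $\{(j,k): |j|+|k|\le 2, 2\mid j+k\}$ is exactly $\{(u+v, u-v) : u, v \in \{-1,0,1\},\ \text{not both }\ldots\}$; cleanly, $(j,k) = (u+v, u-v)$ ranges over $|u|,|v|\le 1$ gives $|j|+|k| = |u+v|+|u-v| = 2\max(|u|,|v|) \le 2$ and $j + k = 2u$ even, and this is a bijection. Hence, setting $A' = \{a, -a, b, -b\}$ (wait—better) $P = \{a, b\}$, $Q = \{a, -b\}$: then $j a + k b$ with $j = u+v$, $k = u - v$ equals $u(a+b) + v(a - b)$, so $2_{\pm}A = \{ u(a+b) + v(a-b) : u, v \in \{-1, 0, 1\}\} = (\{-(a+b), 0, a+b\}) + (\{-(a-b), 0, a-b\})$. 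The first factor is $C - C$ where $C = \{0, a+b\}$... it is cleaner to note each factor is a symmetric set of size $\le 3$ of the form $\{-g, 0, g\}$. So $2_{\pm}A = T_1 + T_2$ with $T_i = \{-g_i, 0, g_i\}$, $g_1 = a+b$, $g_2 = a - b$.

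Now I would bound $|T_1 + T_2|$ from below using difference sets. If $g_1$ and $g_2$ generate a subgroup in which neither $g_1 = \pm g_2$ nor various degeneracies occur, then $T_1 + T_2$ has $9$ elements; but in general we want to compare with $\rho_G^-(m)$ and $\rho_G^-(2m)$. The key observation: $T_1 + T_2 \supseteq (\{0, g_1\} - \{0, g_1\}) + $ something, and more usefully $T_1 + T_2 \supseteq g_2 + (T_1 \cup (T_1 + \text{shift}))$. Let me instead use: $T_1 + T_2 = (T_1 + g_2) \cup T_1 \cup (T_1 - g_2)$, and $T_1 \cup (T_1 + g_2)$, after translating, is a subset containing a difference set of a $2$-element set, etc. The honest route: let $B$ be a set with $|B| = m$ and $|B - B| = \rho_G^-(m)$ — no; we need to go the other direction. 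Given $A = \{a,b\}$, we want to build from $2_{\pm}A$ either a large difference set of an $m$-set or a $2m$-set. Here is where $m$ enters: $r_{\pm}A$ for the actual problem has $|r_{\pm}A| = \rho_\pm(G,m,2)$ achieved, and $r = m$ in the notation — so actually $r_{\pm}A$ is $m_{\pm}A$, the $m$-fold signed sumset of a $2$-element set. So the exponents are $(j,k)$ with $|j|+|k| \le m$, $j + k \equiv m \pmod 2$: via $j = u+v$, $k = u - v$ this is $|u|,|v| \le m/2$-ish... precisely $|u+v| + |u-v| \le m$ iff $|u| + |v| \le \lceil m/2\rceil$-type condition and parity. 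So $m_{\pm}\{a,b\} = \{u g_1 + v g_2\}$ over a diamond in $(u,v)$, which contains the "plus shape" $\{u g_1 : |u| \le \lfloor m/2 \rfloor\} \cup \{v g_2 : |v| \le \lfloor m/2\rfloor\}$ and the full set contains $\{u g_1 + v g_2 : |u|+|v| \le \lceil m/2 \rceil\}$ (roughly). This contains $U_1 - U_1$ where... Concretely: it contains $\{u g_1 : -\lceil m/2\rceil \le u \le \lfloor m/2 \rfloor\}$ which has $\ge m$ translates-worth...

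The cleanest plan for the writeup: Show $m_{\pm}\{a,b\}$ contains the set $X = \{j g_1 : |j| \le \lfloor m/2 \rfloor\} + \{0, g_2\}$ (when $m$ even, exactly; adjust parity when odd), then argue $X$ is an injective image of a set of the form $(\text{AP of length } \lfloor m/2\rfloor + 1) \times \{0,1\}$, unless $g_2$ lies in $\langle g_1 \rangle$, in which case handle separately. Then split into the two cases forecast by the bound: \textbf{Case 1}, $\langle g_1, g_2\rangle$ makes $m_{\pm}\{a,b\}$ "genuinely two-dimensional," where we get $\ge \rho_G^-(m)$ from the long arithmetic progression being (a translate of) a difference set of an $m$-element set (since $\{jg_1 : 0 \le j \le m-1\} = C - C'$... no — $\{jg_1: |j|\le k\} = D - D$ for $D = \{0, g_1, \ldots, kg_1\}$, $|D| = k+1$); \textbf{Case 2}, the degenerate one, where the $2$-element-set structure forces $m_{\pm}\{a,b\}$ to be a difference set of a set of size $\sim 2m$, giving the $\rho_G^-(2m) - 1$ branch (the $-1$ absorbing one boundary point that may be lost to a parity constraint). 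The main obstacle I expect is bookkeeping the parity of $m$ and the off-by-one in the "$-1$": I would handle it by checking that $m_{\pm}\{a,b\} \supseteq \{\, j a - k b : 0 \le j, k,\ j + k \le m\,\}$ minus at most one extreme point, which after the substitution is a translate of $(E - E) \setminus \{\text{pt}\}$ for an explicit $E$ with $|E| = m+1 \ge 2m/\ldots$ — and then reconcile with $2m$ by a more careful choice of the embedded difference set, likely taking $E$ of size $2\lfloor m/2\rfloor + 1$ or $2\lceil m/2 \rceil$ and noting $|E - E| \ge \rho_G^-(|E|) \ge \rho_G^-(2m)$ when $|E| \ge 2m$, absorbing the discrepancy into the "$-1$."
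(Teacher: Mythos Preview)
Your proposal rests on a misreading of the notation $\rho_\pm(G,m,2)$. In the Bajnok--Matzke convention adopted by the paper, the middle argument is the cardinality of the set and the last argument is the number of signed summands; thus
\[
\rho_\pm(G,m,2)=\min\{\,|2_\pm A| : A\subseteq G,\ |A|=m\,\},
\]
not the minimum of $|m_\pm A|$ over two-element sets $A$. Your entire plan --- writing $m_\pm\{a,b\}$ as the image of a diamond in $\Z^2$, then hunting inside it for difference sets of sizes $m$ and $2m$ --- is therefore aimed at the wrong object. This is also why the bookkeeping never settles, why the two branches of the $\min$ never acquire a natural meaning, and why the ``$-1$'' stays mysterious to the end.

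With the correct reading the argument is very short, and the two cases in the $\min$ correspond exactly to whether $A$ meets $-A$. Recall that $2_\pm A=\{\pm 2a_i\}\cup\{\pm a_i\pm a_j:i\neq j\}$ for $A=\{a_1,\dots,a_m\}$. If $A\cap(-A)\neq\emptyset$, pick $x\in A\cap(-A)$; then $0=x+(-x)\in 2_\pm A$, and every difference of two distinct elements of $A$ lies in $2_\pm A$, so $A-A\subseteq 2_\pm A$ and $|2_\pm A|\ge\rho_G^-(m)$. If $A\cap(-A)=\emptyset$, set $B=A\cup(-A)$, so $|B|=2m$; every nonzero element of $B-B$ has the form $\pm a_i\pm a_j$ with $i\neq j$ or $\pm 2a_i$, hence lies in $2_\pm A$, giving $|2_\pm A|\ge|B-B|-1\ge\rho_G^-(2m)-1$. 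That is the paper's proof in full.
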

\begin{proof}
	\setcounter{case}{0}
	Let $A \subseteq G$ be a subset with $|A| = m$. We will show that \[2_\pm A \geq \min\{\rho_G^-(m), \rho_G^-(2m) - 1\}.\]
	We consider two cases, depending on whether or not $A \cap (-A) = \emptyset$.
	\begin{case}[$A \cap (-A) \neq \emptyset$]
		Choose $x \in A \cap (-A)$. By definition, the signed sumset $2_\pm A$ contains $0 = x + (-x)$ and it contains the difference of any two distinct elements of $A$. Therefore, we have $A - A \subseteq 2_\pm A$. It follows that \[|2_\pm A| \geq |A - A| \geq \rho_G^-(m) \geq \min\{\rho_G^-(m), \rho_G^-(2m) - 1\},\] as desired.
	\end{case}
	\begin{case}[$A \cap (-A) = \emptyset$]
		Let $B = A \cup (-A)$. Then $|B| = 2|A|$. By definition, the signed sumset $2_\pm A$ contains $(B - B) \setminus \{0\}$, so \begin{align*}
		|2_\pm A| &\geq |B - B| - 1 \\
		&\geq \rho_G^-(2m)-1 \\
		&\geq \min\{\rho_G^-(m), \rho_G^-(2m) - 1\},\end{align*} as desired. \qedhere
	\end{case}
\end{proof}

Now, we shall restate and prove \autoref{bajnokmatzkeconjecture}.
\bajnokmatzkeconjecture*

\begin{proof}
	\begin{enumerate}[(a)]
		\item By \autoref{signedsumsetlemma} and \autoref{mainthm}, we have \begin{align*}
		\rho_\pm((\Z / p\Z)^2, m, 2) &\geq \min\{\rho_G^-(m), \rho_G^-(2m) - 1\} \\ 
		&= \min\left\{(2c+1)p, \left(4c+2\left\lceil\frac{2v}{p}\right\rceil + 1\right)p - 1\right\} \\
		&= (2c+1)p.
		\end{align*} The reverse inequality $\rho_\pm((\Z / p\Z)^2, m, 2) \leq (2c+1)p$ follows from \cite[Theorem~5]{signed_sumsets}.
		\item By \autoref{signedsumsetlemma} and \autoref{mainthm}, we have \begin{align*}
		\rho_\pm((\Z / p\Z)^2, m, 2) &\geq \min\{\rho_G^-(m), \rho_G^-(2m) - 1\} \\ 
		&= \min\{p^2, p^2 - 1\} \\
		&= p^2 - 1.
		\end{align*}
		The reverse inequality $\rho_\pm((\Z / p\Z)^2, m, 2) \leq p^2 - 1$ follows from \cite[Proposition~8]{signed_sumsets}.
	\end{enumerate}
\end{proof}

\begin{appendices}
\numberwithin{lemma}{section}

\section{Proof of \autoref{sumlemma2}} \label{appendixproof}
In this appendix, we prove \autoref{sumlemma2}. The following lemma is essential to our proof of \autoref{sumlemma2}.

\begin{lemma}\label{sumlemma}
	Let $m > 1$, and let $\lambda = (\lambda_1, \ldots, \lambda_m)$ be a sequence of integers with $\lambda_1 \geq \cdots \geq \lambda_m > 0$ and $\lambda_1 > 1$. Define the sequence $\mu = (\mu_1, \ldots, \mu_{2m - 1})$ by \[\mu_{k} = \max_{k = i + j - 1} (\lambda_i + \lambda_j - 1)\] for $1 \leq k \leq 2m-1$, where the maximum is over all $1 \leq i, j \leq m$ with $k = i + j - 1$. Then \[\sum_{k=1}^{2m-1} \mu_k \geq 3\left(\sum_{k=1}^m \lambda_k\right) - 3.\]
\end{lemma}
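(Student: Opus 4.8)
The plan is to estimate $\sum_{k=1}^{2m-1}\mu_k$ by committing, for each index $k$, to a single admissible pair $(i_k,j_k)$ with $i_k+j_k-1=k$ and $1\le i_k,j_k\le m$, so that $\mu_k\ge\lambda_{i_k}+\lambda_{j_k}-1$ and hence $\sum_k\mu_k\ge\bigl(\sum_k(\lambda_{i_k}+\lambda_{j_k})\bigr)-(2m-1)$. Writing $S=\sum_{k=1}^m\lambda_k$ and $T=\lambda_1+\lambda_m$, I would record two such estimates. The ``balanced'' assignment $(i_k,j_k)=(\lceil(k+1)/2\rceil,\lfloor(k+1)/2\rfloor)$ is always admissible, and reindexing the resulting sum over the parity of $k+1$ gives $\sum_k\mu_k\ge 4S-T-(2m-1)$. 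The ``extremal'' assignment $(1,k)$ for $1\le k\le m$ and $(k-m+1,m)$ for $m<k\le 2m-1$ gives $\sum_k\mu_k\ge 2S+(m-1)T-(2m-1)$. The first estimate already yields the lemma when $S-T\ge 2m-4$, and the second when $(m-1)T-S\ge 2m-4$; if both inequalities failed, then (since all quantities are integers) adding the negations would force $(m-2)(T-4)\le -2$, which is impossible when $m=2$ and also impossible when $T\ge 4$.

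Since $\lambda_1\ge 2$ gives $T\ge 4$ whenever $\lambda_1\ge 3$, one of the two estimates finishes the proof in that case. The only remaining possibility is $\lambda_1=2$, in which case every $\lambda_k$ equals $1$ or $2$, and here I would replace the inequalities by a direct count. Let $a\ge 1$ be the number of indices with $\lambda_k=2$, so $S=2a+(m-a)$. Analyzing which pairs $(i,j)$ with $i+j-1=k$ are admissible shows that $\lambda_i+\lambda_j$ attains the value $4$ exactly when $1\le k\le 2a-1$, attains $3$ (but not $4$) exactly when $2a\le k\le m+a-1$, and otherwise attains only $2$; consequently $\mu_k$ equals $3$, $2$, $1$ on these three ranges, and summing gives $\sum_k\mu_k=3(2a-1)+2(m-a)+(m-a)=3S-3$, so the bound holds with equality. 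The degenerate sub-case $a=m$ (where $\lambda$ is constant and the last two ranges are empty) is included automatically.

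The main obstacle is that no single choice of representative pairs suffices for all $\lambda$: the balanced assignment is too weak when $\lambda$ has a large leading entry and many small entries, while the extremal assignment is too weak when $m$ and the $\lambda_k$ are small. One therefore genuinely has to take the better of the two bounds and then observe that the one leftover case, $\lambda_1=2$, collapses to sequences with entries in $\{1,2\}$ and can be settled by an exact computation. The rest is routine bookkeeping: checking admissibility of the chosen pairs at the endpoints of the range $1\le k\le 2m-1$, performing the parity reindexing in the balanced estimate, and verifying the three $\mu_k$-ranges in the $\lambda_1=2$ case.
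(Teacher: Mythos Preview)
Your proof is correct, and it takes a genuinely different route from the paper.

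The paper interprets $\sum_k \lambda_k$ and $\sum_k \mu_k$ as the cardinalities of the Ferrers diagrams $F(\lambda),F(\mu)\subset\Z^2$, checks directly that $F(\lambda)+F(\lambda)\subseteq F(\mu)$, observes that the hypotheses $m>1$ and $\lambda_1>1$ force $F(\lambda)$ to contain three non-collinear points, and then applies Freiman's dimension lemma (if $A\subset\Z^2$ is not contained in a line then $|A+A|\ge 3|A|-3$) to conclude. Your argument is instead fully elementary: you extract two explicit lower bounds by committing to the ``balanced'' and ``extremal'' pair-assignments, show by adding the two failure inequalities that they cannot both fail unless $(m-2)(T-4)\le -2$, and then dispatch the residual case $\lambda_1=2$ by an exact computation of $\mu$ on sequences with entries in $\{1,2\}$. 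The paper's approach is shorter and more conceptual but imports an outside tool; yours is self-contained and makes transparent exactly where the hypotheses $m>1$ and $\lambda_1>1$ enter (they guarantee $T\ge 3$ and rule out the degenerate sums). Both arguments land on the same equality case $\sum_k\mu_k=3S-3$ for $\lambda\in\{1,2\}^m$, which is reassuring since these $\lambda$ correspond to Ferrers diagrams that are already sumset-extremal for Freiman's lemma.
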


\begin{proof}
	Let \[F(\lambda) = \{(x, y) \in \Z^2 \mid 0 \leq y \leq m-1, 0 \leq x \leq \lambda_{y+1} - 1\} \subset \Z^2\] be the Ferrers diagram of $\lambda$; that is, a set with $m$ rows of points where the $k$th row from the bottom contains $\lambda_k$ points for $1 \leq k \leq m$. Similarly, let \[F(\mu) = \{(x, y) \in \Z^2 \mid 0 \leq y \leq 2m-2, 0 \leq x \leq \mu_{y+1} - 1\} \subset \Z^2\] be the Ferrers diagram of $\mu$.
	
	\par We claim that $F(\mu)$ contains the sumset $F(\lambda) + F(\lambda)$. To show this, take two elements $(x, y)$ and $(x', y')$ in $F(\lambda)$; we wish to show that $(x+x', y+y') \in F(\mu)$. By the definition of $F(\lambda)$ we have 
	\begin{align*}
	0 &\leq y + y' \leq (m-1) + (m-1) = 2m-2 \\
	0 &\leq x + x' \leq (\lambda_{y+ 1} - 1) + (\lambda_{y' + 1} - 1) \leq \mu_{y + y' + 1} - 1
	\end{align*}
	so $(x+x', y + y') \in F(\mu)$ as desired.
	
	\par By assumption, both $m > 1$ and $\lambda_1 > 1$, so $F(\lambda)$ contains the three non-collinear points $(0, 0)$, $(1, 0)$, and $(0, 1)$. Therefore, by Freiman's~dimension~lemma \cite[Theorem~5.20]{tao}, \[\sum_{k=1}^{2m-1} \mu_k = |F(\mu)|\geq |F(\lambda) + F(\lambda)| \geq 3|F(\lambda)| - 3 = 3\left(\sum_{k=1}^m \lambda_k\right) - 3\] as desired.
\end{proof}

We shall now restate and prove \autoref{sumlemma2}.

\sumlemma*

\begin{proof}[Proof of \autoref{sumlemma2}]
	\setcounter{case}{0}
	We may assume that \[\mu_k = \max_{k = i + j - 1}\min \{\lambda_i + \lambda_j - 1, p\}\] for all $k$. Let $h$ be the maximum value of $i + j - 1$ over all integers $1 \leq i, j \leq m$ with $\lambda_i + \lambda_j - 1 > p$, or $0$ if no such $i$ and $j$ exist. Then $\mu_k = p$ for $k \leq h$ and $\mu_{i + j - 1} \geq \lambda_i + \lambda_j - 1$ for $1 \leq i, j \leq m$ as long as $i + j - 1 > h$.
	
	Proceed by induction on $m$. We consider three cases, depending on whether $h = 0$ or $h =1$ or $h \geq 2$.
	
	\begin{case}[$h = 0$]
		Then \autoref{sumlemma} applies, so 
		\begin{align*}
		\sum_{k = 1}^{2m-1} \mu_k &\geq 3\left(\sum_{k=1}^{m}\lambda_k\right) - 3 \\
		&\geq 3(np+1) - 3 \\
		&\geq (2n+1)p
		\end{align*}
		as desired.
	\end{case}
	
	\begin{case}[$h = 1$]
		\par First assume $n = 1$ and $m = 3$. Then 
		\begin{align*}
		\sum_{k=1}^{2m-1} \mu_k &= \mu_1 + \mu_2 + \mu_3 + \mu_4 + \mu_5 \\
		&\geq p + (\lambda_1 + \lambda_2 - 1) + (\lambda_1 + \lambda_3 - 1) + (\lambda_2 + \lambda_3 - 1) + 1 \\
		&\geq p + 2(\lambda_1 + \lambda_2 + \lambda_3) - 2 \\
		&\geq p + 2(p+1)-2 \\
		&= 3p
		\end{align*}
		as desired.
		\par Next assume $n = 1$ and $m \geq 4$. The assumption that $h = 1$ implies that $2 \lambda_1 - 1 > p$, so $\lambda_1 > (p+1)/2$. Therefore $\mu_k \geq \lambda_1 + \lambda_k - 1 > (p+1)/2$ for $1 < k < m$ and $\mu_k \geq \lambda_m + \lambda_{k-m+1} - 1 \geq \lambda_{k-m+1}$ for $k \geq m$, so 
		\begin{align*}
		\sum_{k=1}^{2m-1} \mu_k &> p + \sum_{k=2}^{m-1} \frac{p+1}{2} +\sum_{k=m}^{2m-1} \lambda_{k - m + 1} \\
		&= p + (m-2)\frac{p+1}{2} + (np+1) \\
		&> 3p
		\end{align*} as desired.
		\par It remains to consider the case that $n  \geq 2$. Because $h = 1$, \autoref{sumlemma} applies to the sequences $(\lambda_1, \cdots, \lambda_m)$ and $(2p-1, \mu_2, \cdots, \mu_{2m -1})$. Therefore
		\begin{align*}
		\sum_{k=1}^{2m-1} \mu_k &= p + \sum_{k=2}^{2m-1} \mu_k \\
		&= -p + 1 + \left(2p - 1 + \sum_{k=2}^{2m-1} \mu_k\right) \\
		&\geq -p + 1 + 3(np + 1) - 3 \\
		&\geq (2n+1)p
		\end{align*}
		as desired.
	\end{case}

	\begin{case}[$h \geq 2$]
		Define the sequence $\lambda' = (\lambda'_1, \ldots, \lambda'_{m-1})$ by $\lambda'_k = \lambda_{k+1}$ for $1 \leq k \leq m-1$. Then, define $\mu' = (\mu'_1, \ldots, \mu'_{2m-3})$ by \[\mu'_{k} = \max_{k = i + j - 1} \min\{\lambda'_i + \lambda'_j - 1, p\}\] for $1 \leq k \leq 2m-3$, where the maximum is over all $1 \leq i, j \leq m-1$ with $k = i + j - 1$.
		We have \[\sum_{k=1}^{m-1} \lambda'_k = \left(\sum_{k=1}^{m} \lambda_k\right) - \lambda_1 \geq (n-1)p + 1,\] so by the inductive hypothesis we have \[\sum_{k=1}^{2m-3} \mu'_k \geq (2n - 1)p.\] On the other hand, we have \[\mu_{k+2} =\max_{k+2 = i + j - 1} (\lambda_i + \lambda_j - 1) \geq \max_{k = i + j - 1} (\lambda'_i + \lambda'_j - 1) = \mu'_k\] for $1 \leq k \leq 2m-3$, where the inequality follows from replacing $(i, j)$ with $(i-1, j-1)$. Therefore \[
		\sum_{k=1}^{2m-1} \mu_k = 2p + \sum_{k=1}^{2m-3} \mu'_{k} \\
		\geq (2n + 1)p
		\]
		as desired.
		\qedhere
	\end{case}
\end{proof}
\end{appendices}
\section*{Acknowledgments}
This research was conducted under the supervision of Joseph Gallian at the University of Minnesota Duluth REU, funded by NSF Grant 1358659 and NSA Grant H98230-13-1-0273. The author thanks Joseph Gallian, who ran the REU, brought this question to his attention, and provided helpful comments on the manuscript. He thanks his advisors Levent Alpoge and Benjamin Gunby for valuable discussions and advice.
\bibliography{paper_differences}{}
\bibliographystyle{plain}
\end{document}